\newtheorem{theorem}{Theorem}
\newtheorem{lemma}{Lemma}[section]
\newtheorem{corollary}{Corollary}[section]
\newtheorem{remark}{Remark}[section]
\newcommand{\Ker}{\mathop{\mathrm{Ker}}}
\newcommand{\Pol}{\mathop{\mathrm{Pol}}}
\newcommand{\SQ}{\mathrm{SQ}}
\newcommand{\N}{\mathbb{N}}
\newcommand{\R}{\mathbb{R}}
\newcommand{\Z}{\mathbb{Z}}
\newcommand{\D}{\Delta}
\renewcommand{\O}{\mathrm{O}}
\renewcommand{\o}{\mathrm{o}}
\title{Asymptotically polynomial solutions \\
of difference equations of neutral type}
\author{Janusz Migda\\ \\
 Faculty of Mathematics and Computer Science,\\ A. Mickiewicz University,
Umultowska 87, 61-614 Pozna\a'n, Poland;\\
email: migda@amu.edu.pl} 
\date{}
\begin{document}
\maketitle

\begin{abstract}
Asymptotic properties of solutions of difference equation of the form
\[
\Delta^m(x_n+u_nx_{n+k})=a_nf(n,x_{\sigma(n)})+b_n
\]
are studied. We give sufficient conditions under which all solutions, 
or all solutions with polynomial growth, or all nonoscillatory solutions are 
asymptotically polynomial. We use a new technique which allows us to control 
the degree of approximation. 
\smallskip\\
{\bf Key words:} difference equation, neutral equation, asymptotic behavior, 
asymptotically polynomial solution, nonoscillatory solution.
\smallskip\\
{\bf AMS Subject Classification:} $39A10$
\end{abstract}

\section{Introduction}

Let \ $\N$, $\Z$, $\R$ \ denote the set of positive integers, all 
integers and real numbers respectively. 
Let \ $m\in\N$, \ $k\in\Z$. \ 
We consider asymptotic properties of solutions of difference 
equations of the form 
\begin{equation}\label{E}\tag{E}
\Delta^m(x_n+u_nx_{n+k})=a_nf(n,x_{\sigma(n)})+b_n
\end{equation}
\[
u_n,a_n,b_n\in\R, \quad f:\N\times\R\to\R, \quad 
\sigma:\N\to\Z, \quad \sigma(n)\to\infty, \quad u_n\to c\in\R, \quad |c|\neq 1.
\]
By a solution of (\ref{E}) we mean a sequence \ $x:\N\to\R$ \ satisfying (\ref{E}) 
for all large $n$. \\ 
Asymptotic properties of solutions of neutral difference equations were investigated 
by many authors. These studies tend in several directions. For example, the papers 
\cite{Cheng 1998}, \cite{MJ Migda 2004}, \cite{MJ Migda 2009} and \cite{Cheng 2006} 
are devoted to the classification of solutions. In  \cite{Huang}, \cite{Kang 2009}, 
\cite{Liu 2010} and \cite{Zhou} where studied solutions with prescribed asymptotic 
behavior. In \cite{Agarwal 1996}, \cite{Bolat 2004}, \cite{Karpuz 2009}, 
\cite{Thandapani 1997} were investigated oscillatory solutions. Asymptotically 
polynomial solutions were studied in \cite{MJ Migda 2005}, \cite{M Migda 2006}, 
\cite{Thandapani 2004}, \cite{Wang Sun}. Asymptotically polynomial solutions 
were also studied in continuous case, see for example \cite{Dzurina 2002}, 
\cite{Rogovchenko 2008}, \cite{Naito 1998}.\\ 
Thandapani, Arul and Raja in \cite{Thandapani 2004} establish conditions under which 
for any nonoscillatory solution \ $x$ \ of the equation 
\begin{equation}\label{E2}
\D^2(x_n+px_{n+k})=f(n,x_{n+l})
\end{equation}
there exists a constant \ $a$ \ such that 
\[
x_n=an+\o(n).
\]
In \cite{MJ Migda 2005}, there are given conditions under which any nonoscillatory solution \ 
$x$ \ of \eqref{E2} has an asymptotic behavior 
\[
x_n=an+b+\o(1).
\]
M. Migda, in \cite{M Migda 2006}, establish conditions under which for any nonoscillatory  solution \ $x$ \ of \eqref{E} there exists a constant \ $a$ \ such that  
\[
x_n=an^{m-1}+\o(n^{m-1}).
\]
In this paper, in Theorem \ref{T1}, we extend these results in the following way. 
Let \ $s\in(-\infty,m-1]$ \ and let \ $p$ \ be a nonnegative integer such that \ 
$s\leq p\leq m-1$. \ We establish conditions under which any solution, or any   
solution with polynomial growth, or any nonoscillatory solution \ $x$ \ has an 
asymptotic behavior 
\[
x_n=a_{m-1}n^{m-1}+a_{m-2}n^{m-2}+\dots+a_pn^p+\o(n^s)
\]
for some fixed real \ $a_{m-1},a_{m-2},\dots,a_p$. 
\smallskip\\
The idea of the proof is as follows. Let \ $z$ \ be a sequence defined by 
\begin{equation}\label{z1}
z_n=x_n+u_nx_{n+k}.
\end{equation} 
Using \ $z$ \ we can write equation \eqref{E} in the form 
\begin{equation}\label{Ez}
\Delta^mz_n=a_nf(n,x_{\sigma(n)})+b_n.
\end{equation} 
Let \ $s$ \ be a real number such that \ $s\leq m-1$. \ Assume that 
\[
\sum_{n=1}^\infty n^{m-1-s}|a_n|<\infty \qquad \text{and} \qquad 
\sum_{n=1}^\infty n^{m-1-s}|b_n|<\infty.
\]
Using a Bihari type lemma and some additional assumptions, we show that \eqref{Ez} 
implies 
\begin{equation}\label{z2}
\sum_{n=1}^\infty n^{m-1-s}|\D^mz_n|<\infty.
\end{equation}
Next we use the result from \cite{J Migda 2013}, which states that if \ $\D^mz$ \ 
is asymptotically zero, then \ $z$ \ is asymptotically polynomial. More precisely, 
we show that \eqref{z2} implies 
\begin{equation}\label{z3}
z_n=\varphi(n)+\o(n^s)
\end{equation}
where \ $\varphi$ \ is a polynomial sequence such that \ $\deg\varphi<m$. \ 
Finally, using our Lemma \ref{ASL5}, we show that 
\begin{equation}\label{x1}
x_n=\psi(n)+\o(n^s)
\end{equation}
for certain polynomial sequence \ $\psi$ \ such that \ $\deg\psi<m$. \ 
In the last section we show, that if \ $s=q$ \ is a nonnegative integer, \ 
then \eqref{x1} may be replaced by a stronger condition  
\[
x_n=\psi(n)+w_n, \qquad 
\D^kw_n=\o(n^{q-k}) \quad \text{for} \quad k=0,1,\dots,q.
\]
The paper is organized as follows. In Section 2, we introduce notation and 
terminology. Section 3 is devoted to the proof of Lemma \ref{ASL5}. In Section 4,  
we obtain Theorem \ref{T1}, which is the main result of this paper. The proof of 
Theorem \ref{T1} is based on three lemmas: Lemma \ref{ASL5}, Lemma \ref{BHL}, 
and Lemma \ref{SDL}. In Section 5, we obtain a result analogous to Theorem \ref{T1}, 
but we replace the spaces of asymptotically polynomial sequences by the spaces of 
regularly asymptotically polynomial sequences (see \eqref{raspol}).

\section{Notation and terminology}

By \ $\SQ$ \ we denote the space of all sequences \ $x:\N\to\R$. \ 
If \ $p,k\in\Z$, \ $k\geq p$ \ then 
\[
\N(p,k)=\{p,p+1,\dots,k\}, \qquad \N(p)=\{p,p+1,\dots \}.
\]
For \ $m\in\N(0)$, \ we define 
\[
\Pol(m-1)=\Ker\D^m=\{x\in\SQ:\D^mx=0\}.
\]
Then \ $\Pol(m-1)$ \ is the space of all polynomial sequences of degree less 
than \ $m$. \ Note that 
\[
\Pol(-1)=\Ker\D^0=0
\]
is the zero space. For \ $x,y\in \SQ$, \ we define the product \ $xy$ \ by \  $(xy)(n)=x_ny_n$ \ for any \ $n$. \ Moreover, \ $|x|$ \ denotes the sequence defined 
by \ $|x|(n)=|x_n|$ \ for any $n$. 
\medskip\\
We use the symbols \ ''big $\O$'' \ and \ ''small $\o$'' \ in the usual sense but 
for \ $a\in\SQ$ \ we also regard \ $\o(a)$ \ and \ $\O(a)$ \ as subspaces 
of \ $\SQ$. \ More precisely 
\[
\o(1)=\{x\in\SQ: x_n\to 0\}, \qquad \O(1)=\{x\in\SQ: x \ \text{is bounded}\}
\]
\[
\o(a)=a\o(1)=\{ax: x\in\o(1)\}, \qquad  \O(a)=a\O(1)=\{ax: x\in\O(1)\}.
\]
For a subset \ $X$ \ of \ $\SQ$, \ let 
\[
\D^mX=\{\D^mx: x\in X\}, \qquad \D^{-m}X=\{z\in\SQ: \D^mz\in X\}
\]
denote respectively the image and the inverse image of \ $X$ \ under the map \ 
$\D^m:\SQ\to\SQ$. \ 
Now, we can define spaces of asymptotically polynomial sequences and regularly  asymptotically polynomial sequences 
\begin{equation}\label{raspol}
\Pol(m-1)+\o(n^s), \qquad\quad \Pol(m-1)+\D^{-k}\o(1),
\end{equation}
where \ $s\in(-\infty,m-1]$ \ and \ $k\in\N(0,m-1)$. \ Moreover, let 
\[
\o(n^{-\infty})=\bigcap_{s\in\R}\o(n^s)=\bigcap_{k=1}^\infty\o(n^{-k}), \qquad 
\O(n^\infty)=\bigcup_{s\in\R}\O(n^s)=\bigcup_{k=1}^\infty\O(n^k).
\]
Note that the condition \ $\limsup\sqrt[n]{|a_n|}<1$ \ or \  $\lim\sup\dfrac{|a_{n+1}|}{|a_n|}<1$ \ implies \ $a\in\o(n^{-\infty})$.
\smallskip\\
Let \ $x,u\in\SQ$ \ and \ $k\in\Z$. \ We say that \ $x$ \ is \ nonoscillatory \ 
if \ $x_nx_{n+1}\geq 0$ \ for large \ $n$. \ If \ $x_nx_{n+k}\geq 0$ \ for 
large \ $n$ \ we say that \ $x$ \ is \ $k$-nonoscillatory. If \ $x_nu_nx_{n+k}\geq 0$ \ 
for large \ $n$ \ we say that \ $x$ \ is \ $(u,k)$-nonoscillatory. 
\begin{remark}
If \ $\liminf u_n>0$, \ then a sequence \ $x$ \ is $(u,k)$-nonoscillatory 
if and only if it is $k$-nonoscillatory. If \ $\limsup u_n<0$, \ then \ 
$x\in\SQ$ \ is $(u,k)$-nonoscillatory if and only if \ $-x$ \ is $k$-nonoscillatory. 
Every nonoscillatory sequence \ $x$ \ is also $k$-nonoscillatory for any \ $k\in\Z$. 
\end{remark}
Let \ $X$ \ be a metric space. A function $g:X\to\mathbb{R}$ is called locally 
bounded if for any \ $x\in X$ \ there exists a neighborhood \ $U$ \ of \ $x$ \ such 
that the restriction \ $g|U$\ is bounded. 
\begin{remark}
If \ $X$ \ is a closed subset of \ $\mathbb{R}$, \ then a function \ 
$g:X\to\mathbb{R}$ \ is locally bounded if and only if it is bounded on every 
bounded subset of \ $X$. \ On the other hand if, for example, \ $h:(0,\infty)\to\R$ \ 
is given by \ $g(t)=t^{-1}$, \ then \ $g$ \ is locally bounded and \ $g|(0,1)$ \ is  unbounded.
\end{remark}
Let \ $f:\N\times\R\to\R$, \ $g:[0,\infty)\to[0,\infty)$, \ and $p\in\R$. \ 
We say that \ $f$ \ is \ $(g,p)$-bounded if 
\[
|f(n,t)|\leq g\left(\frac{|t|}{n^p}\right)
\]
for any \ $(n,t)\in\N\times\R$. 
\smallskip\\
We say that a sequence \ $x$ \ is of polynomial growth if \ $x\in\O(n^\infty)$.

\section{Associated sequences} 

In this section we assume that \ $x,u,z\in\SQ$, \ $k\in\Z$, \ $\lim u_n=c\in\R$, \ 
$|c|\neq 1$ \ and 
\[
z_n=x_n+u_nx_{n+k}, \qquad \text{for} \qquad n\geq\max(0,-k).
\]
This section is devoted to the proof of Lemma \ref{ASL5}. In this lemma, we establish  conditions under which, for a given real \ $\alpha$, \ the condition \  $z\in\Pol(m)+\o(n^\alpha)$ \ implies 
\[
x\in\Pol(m)+\o(n^\alpha). 
\]
Lemma \ref{ASL5} extends \cite[Lemma 4]{MJ Migda 2005} 
and will be used in the proof of Theorem \ref{T1}.

\begin{lemma}\label{ASL1}
Assume \ $x$ \ is bounded and \ $z$ \ is convergent. Then \ $x$ \ is convergent and 
\[ 
(1+c)\lim_{n\to\infty}x_n=\lim_{n\to\infty}z_n.
\]
\end{lemma}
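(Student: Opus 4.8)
The plan is to reduce the statement to a simple contraction estimate on $\limsup$. First I would replace the variable coefficient $u_n$ by its limit $c$. Since $x$ is bounded and $u_n\to c$, the sequence $(u_n-c)x_{n+k}$ is a bounded sequence times a null sequence, hence $(u_n-c)x_{n+k}\to 0$. Writing $L=\lim_{n\to\infty}z_n$ and using $z_n=x_n+u_nx_{n+k}$, this gives $x_n+cx_{n+k}=z_n-(u_n-c)x_{n+k}\to L$. So it suffices to analyse the constant-coefficient relation $x_n+cx_{n+k}\to L$, where $x$ is still bounded.

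Next I would eliminate $L$. Because $|c|\neq 1$ we have $1+c\neq 0$, so the candidate limit $\lambda:=L/(1+c)$ is well defined. Put $y_n:=x_n-\lambda$; then $y$ is bounded, and $y_n+cy_{n+k}=(x_n+cx_{n+k})-(1+c)\lambda=(x_n+cx_{n+k})-L\to 0$. The whole statement is thus reduced to the claim: if $y$ is bounded and $y_n+cy_{n+k}\to 0$ with $|c|\neq 1$, then $y_n\to 0$. Granting this, $x_n\to\lambda$ and $(1+c)\lim x_n=L=\lim z_n$, as required.

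Finally I would prove the claim with a $\limsup$ contraction, and here is the one point that needs care: the estimate must be organised differently according to whether $|c|<1$ or $|c|>1$, since the ``small'' term is $cy_{n+k}$ in the first case and $y_n$ in the second. Set $\alpha=\limsup_{n\to\infty}|y_n|$, which is finite by boundedness and satisfies $\limsup_{n\to\infty}|y_{n+k}|=\alpha$ (reindexing leaves the set of subsequential limits unchanged). If $|c|<1$, then from $|y_n|\le|y_n+cy_{n+k}|+|c|\,|y_{n+k}|$ I take the limit superior to get $\alpha\le 0+|c|\alpha$, which forces $\alpha=0$. If $|c|>1$, I instead solve for the shifted term, $|y_{n+k}|\le\tfrac{1}{|c|}\bigl(|y_n+cy_{n+k}|+|y_n|\bigr)$, and taking the limit superior gives $\alpha\le\tfrac{1}{|c|}\alpha$, again forcing $\alpha=0$ since $1/|c|<1$. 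In both cases $y_n\to 0$. The essential use of the hypothesis $|c|\neq 1$ is precisely this strict contraction factor: when $|c|=1$ the resulting inequality $\alpha\le\alpha$ is vacuous, and indeed (take $c=-1$) the conclusion genuinely fails.
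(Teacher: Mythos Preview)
Your argument is correct. The reduction to the constant-coefficient relation $x_n+cx_{n+k}\to L$ via boundedness of $x$ and $u_n-c\to 0$ is valid; the further reduction to $y_n+cy_{n+k}\to 0$ by subtracting the candidate limit $\lambda=L/(1+c)$ is legitimate since $|c|\neq 1$ guarantees $1+c\neq 0$; and the $\limsup$ contraction, split according to whether the contracting factor is $|c|$ or $1/|c|$, is clean and uses only the subadditivity of $\limsup$ together with the invariance of $\limsup|y_n|$ under a fixed index shift.

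As for comparison: the paper does not give its own proof of this lemma but simply cites Lemma~1 of \cite{MJ Migda 2005}. Your write-up therefore supplies a self-contained argument where the paper defers to an external reference. The $\limsup$-contraction approach you use is standard for this type of statement and is likely close in spirit to the cited proof, though I cannot confirm the details of that reference from the present paper alone.
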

\begin{proof}
See Lemma 1 in \cite{MJ Migda 2005}.
\end{proof}

\begin{remark}
Boundedness of \ $x$ \ cannot be omitted in Lemma \ref{ASL1}. For example, 
if \ $x_n=2^n$, \ $u_n=-2^{-1}$ \ and \ $k=1$, \ then \ $z_n=0$ \ for any \ $n$ \ 
and \ $x$ \ is divergent. However, see the next lemma.
\end{remark}
\begin{lemma}\label{ASL2}
Assume one of the following conditions is satisfied
\[
(a) \quad |c|<1 \quad \text{and} \quad k\leq 0, \qquad\quad 
(b) \quad |c|>1 \quad \text{and} \quad k\geq 0.
\]
Then boundedness of the sequence \ $z$ \ implies boundedness of \ $x$.
\end{lemma}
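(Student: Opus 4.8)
The plan is to rewrite the defining relation $z_n = x_n + u_n x_{n+k}$ so that the unknown carrying the coefficient of larger modulus is isolated against a genuinely \emph{contractive} factor, and then to iterate the resulting recurrence inequality, absorbing the finitely many initial values of $x$. In both cases the hypotheses $u_n \to c$ and $|c| \neq 1$ furnish an index $N$ and a constant $M$ with $|z_n| \le M$ for $n \ge N$ and with $|u_n|$ eventually bounded away from $1$ on the side dictated by the case.

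In case (a), where $|c| < 1$ and $k \le 0$, I would fix $\lambda$ with $|c| < \lambda < 1$ and take $N \ge |k|$ so large that $|u_n| \le \lambda$ for $n \ge N$. Solving for $x_n$ gives
\[
|x_n| \le |z_n| + |u_n|\,|x_{n+k}| \le M + \lambda\,|x_{n+k}|, \qquad n \ge N,
\]
where, because $k \le 0$, the index $n+k$ does not exceed $n$. Put $D = \max\bigl(M/(1-\lambda),\ \max_{1 \le i \le N}|x_i|\bigr)$. I would then show $|x_n| \le D$ for all $n$ by induction: the cases $1 \le n \le N$ are immediate, and for $n > N$ (with $k < 0$, so $1 \le n + k < n$) the displayed bound together with $|x_{n+k}| \le D$ gives $|x_n| \le M + \lambda D \le (1-\lambda)D + \lambda D = D$, since $M \le (1-\lambda)D$; the degenerate instance $k = 0$ is handled at once, as the inequality then reads $(1-\lambda)|x_n| \le M$. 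Thus $x$ is bounded.

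In case (b), where $|c| > 1$ and $k \ge 0$, I would instead isolate the \emph{forward} term. Fixing $\mu$ with $1 < \mu < |c|$ and $N$ with $|u_n| \ge \mu$ for $n \ge N$ (so $u_n \neq 0$ there), the identity $u_n x_{n+k} = z_n - x_n$ yields
\[
|x_{n+k}| \le \frac{|z_n| + |x_n|}{|u_n|} \le \frac{M}{\mu} + \rho\,|x_n|, \qquad n \ge N, \quad \rho := \frac{1}{\mu} < 1.
\]
For $k = 0$ this reads $(\mu - 1)|x_n| \le M$ directly. For $k \ge 1$, I would iterate along each arithmetic progression $n_0, n_0 + k, n_0 + 2k, \dots$ with base $n_0 \in \N(N, N+k-1)$: the values $a_r = |x_{n_0 + rk}|$ satisfy $a_{r+1} \le M/\mu + \rho\,a_r$, whence the standard linear estimate gives $a_r \le \max\bigl(a_0,\ M/(\mu - 1)\bigr)$ for all $r$. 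Since finitely many such progressions, together with the finitely many terms of index below $N$, exhaust $\N$, the sequence $x$ is bounded.

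The only substantive point is the choice of which variable to isolate so that its coefficient is a contraction; this is precisely where the matched sign pattern of the hypotheses is used, and relaxing it breaks the conclusion, as the preceding remark shows with $x_n = 2^n$, $u_n = -\tfrac12$, $k = 1$ (there $|c| = \tfrac12 < 1$ but $k > 0$, so neither (a) nor (b) applies, and $z \equiv 0$ while $x$ is unbounded). Everything else is routine bookkeeping: confirming that the downward (resp.\ forward) iteration meets only finitely many free initial values and that the geometric tails sum to a finite constant.
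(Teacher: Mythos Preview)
Your argument is correct. For case~(a) it is essentially the paper's proof: both isolate $x_n$ against the contractive coefficient $|u_n|<\lambda<1$ and iterate backward to the finitely many initial values; you organise this as a strong induction with a fixed ceiling $D$, while the paper iterates explicitly and sums the geometric series, but the content is the same. For case~(b) the two proofs genuinely diverge. The paper does not argue directly: it sets $y_n=u_nx_{n+k}$ and $v_n=1/u_n$, observes that $y_n+v_ny_{n-k}=z_n$ with $\lim v_n=1/c$ of modulus $<1$ and shift $-k\le 0$, and thereby \emph{reduces} case~(b) to case~(a) applied to the pair $(y,z)$; boundedness of $x=z-y$ then follows. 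Your route is instead a direct forward iteration of $|x_{n+k}|\le M/\mu+\rho|x_n|$ along the $k$ residue classes. Both are short; the paper's substitution is slicker in that it avoids repeating the iteration mechanics, while your version is more self-contained and makes the role of the contraction constant $\rho=1/\mu$ explicit. Either way, the key insight---the sign condition $k(|c|-1)\ge 0$ is exactly what lets one isolate a term against a coefficient of modulus $<1$ with the iteration running toward known data---is the same.
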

\begin{proof}
Assume  \ $(a)$ \ and the sequence \ $z$ \ is bounded. Choose \ $b>0$ \ 
such that \ $|z_n|\leq b$ \ for all \ $n$. \ Choose a number \ $\beta$ \ 
such that \ $|c|<\beta<1$. \ Let \ $r=-k$. \ Then \ $r\geq 0$ \ and there 
exists \ $n_0\geq r$ \ such that \ $|u_n|<\beta$ \ for \ $n\geq n_0$. \ Let 
\[ 
K=\max(|x_0|,\dots,|x_{n_0}|), \qquad n\in\N(n_0). 
\]
There exists \ $m\in\N(0)$ \ such that 
\[
0\leq n-mr\leq n_0,\quad n-(m-1)r>n_0.
\]
Since \ $x_n=z_n-u_nx_{n-r}$, \ we obtain 
\[
|x_n|\leq b+|u_n||x_{n-r}|<b+\beta|x_{n-r}|. 
\]
Similarly \ $|x_{n-r}|<b+\beta|x_{n-2r}|$. \ Hence 
\[ 
|x_n|<b+\beta b+\beta^2|x_{n-2r}| 
\]
and so on. After $m$ steps we obtain  
\[
|x_n|<b(1+\beta+\beta^2+\dots+\beta^{m-1})+\beta^m|x_{n-mr}|.
\]
Since \ $\beta\in \ (0,1)$ \ and \ $n-mr\leq n_0$, \ we have \ 
$\beta^m|x_{n-mr}|<K$. \ Hence 
\[
|x_n|<\frac{b}{1-\beta}+K.
\]
So, the sequence \ $(x_n)$ \ is bounded. 
\smallskip\\
Now, assume \ $(b)$. \ Let
\[
v_n=\frac{1}{u_n}, \qquad  c'=\frac{1}{c}, \qquad y_n=u_nx_{n+k}.
\]
Then \ $|c'|<1$, \ $\lim v_n=c'$ \ and \ $y_n+v_ny_{n-k}=u_nx_{n+k}+x_n=z_n$. \ 
Hence, by first part of the proof, the sequence \ $y$ \ is bounded. Therefore 
the sequence  \ $x=z-y$ \ is bounded too. The proof is complete. 
\end{proof}
Lemma \ref{ASL2} extends \cite[Lemma 2]{MJ Migda 2005}.

\begin{lemma}\label{ASL3}
If \ $x\in \O(n^\infty)$ \ and \ $z$ \ is bounded, then \ $x$ \ is bounded. 
\end{lemma}
\begin{proof}
By Lemma \ref{ASL2}, we can assume that one of the following conditions is satisfied
\[
(a) \quad |c|<1 \quad \text{and} \quad k>0, \qquad\quad 
(b) \quad |c|>1 \quad \text{and} \quad k<0.
\]
Assume (a) and choose \ $M>1$ \ such that \ $|z_n|\leq M$ \ for all \ $n$. \ 
Choose a number \ $\rho$ \ such that \ $|c|<\rho<1$. \ There exists an index \ $n_1$ \ 
such that \ $|u_n|<\rho$ \ for \ $n\geq n_1$. \ Then 
\begin{equation}\label{rho}
|z_n-x_n|=|u_n||x_{n+k}|<\rho|x_{n+k}|
\end{equation}
for \ $n\geq n_1$. \ Let \ $r=\rho^{-1}$. \ Then \ $r>1$ \ and, by \eqref{rho},  
\[
|x_{n+k}|>r|z_n-x_n| 
\] 
for \ $n\geq n_1$. \ Choose a constant \ $N$ \ such that 
\begin{equation}\label{N}
N>\frac{1}{r-1}. 
\end{equation}
Assume the sequence \ $(x_n)$ \ is unbounded. Then there exists \ 
$p\geq n_1$ \ such that 
\begin{equation}\label{N+1} 
|x_{p}|\geq(N+1)M. 
\end{equation}
Since \ $|z_{p}|\leq M$, \ by \eqref{N+1}, we have \ $|z_{p}-x_{p}|\geq NM$. \ Then 
\[ 
|x_{p+k}|>r|z_{p}-x_{p}|\geq rNM. 
\] 
The condition \ $|z_{p+k}|\leq M$ \ implies 
\[ 
|z_{p+k}-x_{p+k}|\geq rNM-M=(rN-1)M. 
\]
Hence 
\begin{equation}\label{p+2k}
|x_{p+2k}|>r|z_{p+k}-x_{p+k}|\geq r(rN-1)M. 
\end{equation}
Since \ $|z_{p+2k}|\leq M$, \ we obtain 
\[ 
|z_{p+2k}-x_{p+2k}|\geq r(rN-1)M-M=(r(rN-1)-1)M.
\]
If 
\[ 
a_1=rN, \quad a_2=r(a_1-1), \ \dots, \ a_{n+1}=r(a_n-1) ,
\] 
then, as in \eqref{p+2k}, we have 
\begin{equation}\label{p+nk}
|x_{p+nk}|\geq a_nM 
\end{equation}
for \ $n\geq 1$. \ Moreover, 
\[ 
a_2=r(a_1-1)=r^2N-r, \qquad a_3=r(a_2-1)=r^3N-r^2-r
\]
and so on. Hence, for \ $n\geq 1$, \ we obtain 
\[
a_n=r^nN-(r^{n-1}+r^{n-2}+\dots+r+1)+1
\]
\[
=r^nN-\frac{r^n-1}{r-1}+1=
\left(N-\frac{1}{r-1}\right)r^n+\frac{1}{r-1}+1.
\]
Let 
\[
a=N-\frac{1}{r-1}, \qquad b=\frac{1}{r-1}+1.
\]  
By \eqref{N}, \ $a>0$. Since \ $r>1$, \ we have \ $b>0$. \ Moreover, by \eqref{p+nk}, 
\[
|x_{p+nk}|\geq ar^n+b 
\]
for \ $n\geq 1$. \ Since \ $x\in \O(n^\infty)$, \ there exists a number \ $\alpha>1$ \ 
such that \ $x_n=\O(n^\alpha)$. \ There exist \ $w\in(0,\infty)$ \ and \ 
$m_0\in\N(0)$ \ such that 
\[
(p+nk)^\alpha<wn^\alpha
\]
for \ $n\geq m_0$. \ Then  
\[
\frac{x_{p+nk}}{(p+nk)^\alpha}>\frac{ar^n+b}{(p+nk)^\alpha}>
\frac{a}{w}\frac{r^n}{n^\alpha}
\]
for \ $n\geq m_0$. \ It is impossible since \ $r>1$ \ and \ $x_n=\O(n^\alpha)$. \ 
Hence, the sequence \ $(x_n)$ \ is bounded. 
Now assume \ (b) \ and \ $x_n=\O(n^\alpha)$. \ Let 
\[
v_n=\frac{1}{u_n}, \qquad  c'=\frac{1}{c}, \qquad y_n=u_nx_{n+k}.
\]
Then 
\[
|c'|<1, \quad \lim v_n=c', \quad y_n=\O(n^\alpha), \quad  
y_n+v_ny_{n-k}=u_nx_{n+k}+x_n=z_n 
\]
and by the first part of the proof the sequence \ $(y_n)$ \ is bounded. Hence, the 
sequence \ $x_n=z_n-y_n$ \ is bounded too. 
\end{proof}

\begin{lemma}\label{ASL4}
Let \ $\alpha\in\R$. \ Assume \ $k(|c|-1)\geq 0$ \ or \ $x\in \O(n^\infty)$. \ 
Then 
\begin{itemize}
\item[$(1)$] \ if \ $z_n=\O(n^\alpha)$, \ then \ $x_n=\O(n^\alpha)$,
\item[$(2)$] \ if \ $z_n=\o(n^\alpha)$, \ then \ $x_n=\o(n^\alpha)$.
\end{itemize}
\end{lemma}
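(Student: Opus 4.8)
The plan is to reduce both statements to the already-established boundedness results (Lemmas \ref{ASL2} and \ref{ASL3}) and to the limit identity (Lemma \ref{ASL1}) by a rescaling trick. For the fixed $\alpha\in\R$, I would introduce the weighted sequences
\[
\tilde x_n=\frac{x_n}{n^\alpha}, \qquad \tilde z_n=\frac{z_n}{n^\alpha}, \qquad
\tilde u_n=u_n\,\frac{(n+k)^\alpha}{n^\alpha},
\]
defined for all sufficiently large $n$. A direct computation gives $\tilde z_n=\tilde x_n+\tilde u_n\tilde x_{n+k}$, so the triple $(\tilde x,\tilde u,\tilde z)$ satisfies the same neutral relation as $(x,u,z)$, with the same $k$. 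Since $(1+k/n)^\alpha\to 1$, we have $\tilde u_n\to c$, so the standing assumptions of this section ($\lim\tilde u_n=c$, $|c|\neq 1$) hold for the rescaled triple as well. The value of the rescaling is that $z_n=\O(n^\alpha)$ is equivalent to boundedness of $\tilde z$ and $z_n=\o(n^\alpha)$ is equivalent to $\tilde z_n\to 0$, while $x_n=\O(n^\alpha)$, resp.\ $x_n=\o(n^\alpha)$, is equivalent to boundedness, resp.\ convergence to zero, of $\tilde x$.

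For part $(1)$ I would argue as follows. Assume $z_n=\O(n^\alpha)$, so $\tilde z$ is bounded. If $k(|c|-1)\geq 0$, then, because $|c|\neq 1$, this is precisely condition $(a)$ or $(b)$ of Lemma \ref{ASL2} for the unchanged pair $c,k$; applying Lemma \ref{ASL2} to the rescaled triple gives that $\tilde x$ is bounded. If instead $x\in\O(n^\infty)$, then $\tilde x=x\,n^{-\alpha}\in\O(n^\infty)$ as well, and Lemma \ref{ASL3} applied to $(\tilde x,\tilde u,\tilde z)$ again yields boundedness of $\tilde x$. In either case $\tilde x$ is bounded, that is, $x_n=\O(n^\alpha)$.

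For part $(2)$ I would bootstrap off part $(1)$. If $z_n=\o(n^\alpha)$, then in particular $z_n=\O(n^\alpha)$, so by part $(1)$ the sequence $\tilde x$ is bounded. Moreover $\tilde z_n\to 0$, so $\tilde z$ is convergent. Lemma \ref{ASL1}, applied to the rescaled triple, then shows that $\tilde x$ converges and $(1+c)\lim\tilde x_n=\lim\tilde z_n=0$. Since $|c|\neq 1$ forces $1+c\neq 0$, we conclude $\lim\tilde x_n=0$, i.e.\ $x_n=\o(n^\alpha)$.

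The only genuinely technical point is the bookkeeping behind the rescaling: one must verify that $\tilde u_n\to c$ (via $(n+k)^\alpha/n^\alpha\to 1$) and that $(\tilde x,\tilde u,\tilde z)$ really satisfies the neutral identity for large $n$, where $n+k\geq 1$ so that $(n+k)^\alpha$ is defined. Because the three lemmas invoked are stated for asymptotic conclusions, restricting to large $n$ costs nothing, and the substantive work—the geometric-growth estimates—is already packaged inside Lemmas \ref{ASL2} and \ref{ASL3}, so no new hard estimate is needed here.
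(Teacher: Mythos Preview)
Your proposal is correct and follows essentially the same route as the paper: the paper first treats $\alpha=0$ directly via Lemmas \ref{ASL1}--\ref{ASL3}, then reduces the general $\alpha$ to this case by the identical rescaling $z_n/n^\alpha=x_n/n^\alpha+u_n(1+k/n)^\alpha\,x_{n+k}/(n+k)^\alpha$ together with $(1+k/n)^\alpha\to 1$. Your version packages the two steps into one by introducing $\tilde x,\tilde u,\tilde z$ from the start, and you are even a bit more explicit than the paper in using $c\neq -1$ to pass from $(1+c)\lim\tilde x_n=0$ to $\lim\tilde x_n=0$.
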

\begin{proof}
Assume \ $\alpha=0$. \ If \ $k(|c|-1)\geq 0$, \ then the result follows from 
Lemma \ref{ASL2} and Lemma \ref{ASL1}. If \ $x\in \O(n^\infty)$, \ then by 
Lemma \ref{ASL3} boundedness of \ $z$ \ implies boundedness of \ $x$. \ 
Moreover, by Lemma \ref{ASL3} and Lemma \ref{ASL1}, convergence of \ $z$ \ implies 
convergence of \ $x$. \ Now assume \ $\alpha$ \ is an arbitrary real 
number. By the equality 
\[
\frac{z_n}{n^\alpha}=\frac{x_n}{n^\alpha}+u_n\frac{(n+k)^\alpha}{n^\alpha}
\frac{x_{n+k}}{(n+k)^\alpha}=
\frac{x_n}{n^\alpha}+u_n\left(1+\frac{k}{n}\right)^\alpha\frac{x_{n+k}}{(n+k)^\alpha}
\]
and the equality
\[
\lim_{n\to\infty}\left(1+\frac{k}{n}\right)^\alpha=1
\]
we see that the result is a consequence of the first part of the proof.
\end{proof}
Now, we are ready to state and prove the main result of this section.

\begin{lemma}\label{ASL5}
Assume \smallskip \ $k(|c|-1)\geq 0$ \ or \ $x\in \O(n^\infty)$. \ Let \ 
$m\in\N(0)$, \ $\alpha\in\R$, \ and 
\[
u_n=c+\o(n^{\alpha-m}). 
\]
Then the condition \ $z\in\Pol(m)+\o(n^\alpha)$ \ implies \ $x\in\Pol(m)+\o(n^\alpha)$.
\end{lemma}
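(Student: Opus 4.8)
The plan is to reduce the statement to Lemma~\ref{ASL4} by subtracting an explicit polynomial correction from $x$. First I would record the hypothesis $z\in\Pol(m)+\o(n^\alpha)$ in the form
\[
z_n=\varphi(n)+e_n,\qquad \varphi\in\Pol(m),\quad e\in\o(n^\alpha),
\]
so that $\varphi$ is a polynomial sequence of degree at most $m$. The goal is then to exhibit a single $\psi\in\Pol(m)$ with $x-\psi\in\o(n^\alpha)$.

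The key step — and the only place where the hypothesis $|c|\neq1$ really bites — is an invertibility observation on polynomials. Consider the linear operator $T$ on the finite-dimensional space $\Pol(m)$ given by $(T\psi)(n)=\psi(n)+c\,\psi(n+k)$. Since the shift $n\mapsto n+k$ does not alter the leading coefficient of a polynomial, $T$ sends a polynomial of degree $d$ to one of degree $d$ whose leading coefficient is multiplied by $1+c$. Because $|c|\neq1$ forces $c\neq-1$, we have $1+c\neq0$, so $T$ annihilates no nonzero polynomial and is therefore bijective on $\Pol(m)$. I would use this to select the unique $\psi\in\Pol(m)$ solving $T\psi=\varphi$, i.e. $\psi(n)+c\,\psi(n+k)=\varphi(n)$.

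With this $\psi$ fixed, I would put $\tilde x=x-\psi$ and examine its associated sequence $\tilde z_n=\tilde x_n+u_n\tilde x_{n+k}$. A direct substitution, using $z_n=\varphi(n)+e_n$ together with $\varphi(n)=\psi(n)+c\,\psi(n+k)$, collapses to
\[
\tilde z_n=z_n-\psi(n)-u_n\psi(n+k)=e_n-(u_n-c)\,\psi(n+k).
\]
Here $e\in\o(n^\alpha)$, while $u_n-c=\o(n^{\alpha-m})$ by hypothesis and $\psi(n+k)=\O(n^m)$ because $\deg\psi\le m$; hence the product $(u_n-c)\psi(n+k)$ lies in $\o(n^\alpha)$, and consequently $\tilde z\in\o(n^\alpha)$.

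Finally I would apply Lemma~\ref{ASL4}$(2)$ to the pair $(\tilde x,\tilde z)$, which satisfies the same defining relation $\tilde z_n=\tilde x_n+u_n\tilde x_{n+k}$ with the same coefficient sequence $u$. The one point needing care is that the standing hypothesis of Lemma~\ref{ASL4} must survive the replacement of $x$ by $\tilde x$: the alternative $k(|c|-1)\ge0$ is untouched, and in the alternative $x\in\O(n^\infty)$ I would note that $\psi\in\O(n^\infty)$ (being a polynomial sequence) and that $\O(n^\infty)$ is closed under subtraction, so $\tilde x=x-\psi\in\O(n^\infty)$ as well. Lemma~\ref{ASL4}$(2)$ then gives $\tilde x\in\o(n^\alpha)$, and therefore $x=\psi+\tilde x\in\Pol(m)+\o(n^\alpha)$, which is the desired conclusion. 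I expect the genuine content to reside entirely in the invertibility of $T$; everything after the choice of $\psi$ is bookkeeping in the $\o/\O$ calculus plus the routine transfer of the hypothesis to the shifted pair.
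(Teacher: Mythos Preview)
Your argument is correct and a bit slicker than the paper's. The paper does not invert the operator $T\psi(n)=\psi(n)+c\,\psi(n+k)$ on $\Pol(m)$ in one stroke; instead it runs an induction on the degree, at each step writing $z'_n=an^{m+1}+w_n$ with $w\in\Pol(m)+\o(n^\alpha)$ and subtracting the single correction $v_n=x_n-\dfrac{a}{1+c}\,n^{m+1}$ so that $v_n+cv_{n+k}\in\Pol(m)+\o(n^\alpha)$, then invoking the inductive hypothesis. In effect the paper is establishing the bijectivity of $T$ by displaying it as upper--triangular with diagonal entries $1+c$ in the monomial basis, while you appeal to that bijectivity abstractly. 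The paper also first passes from the variable--coefficient relation $z_n=x_n+u_nx_{n+k}$ to the constant--coefficient one $z'_n=x_n+cx_{n+k}$, which forces a preliminary application of Lemma~\ref{ASL4} to secure $x=\O(n^m)$ before estimating $(c-u_n)x_{n+k}$; your route avoids this detour because the term you need to control is $(u_n-c)\psi(n+k)$, and $\psi(n+k)=\O(n^m)$ is automatic for a polynomial. The paper's version has the minor expository advantage of building $\psi$ explicitly one degree at a time; yours is shorter and isolates the algebraic core (the invertibility of $T$ when $c\neq-1$) more cleanly.
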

\begin{proof}
If \ $\alpha>m$, \ then 
\[
\Pol(m)+\o(n^{\alpha})=\o(n^{\alpha}) 
\]
and the assertion follows from Lemma \ref{ASL4}. Assume \ $\alpha\leq m$. \ 
For \ $n\geq\max(0,-k)$, \ let 
\[
z'_n=x_n+cx_{n+k}. 
\]
We will show, by induction on \ $m$, \ that 
\begin{equation}
z'\in\Pol(m)+\o(n^\alpha) \ \Longrightarrow\  x\in\Pol(m)+\o(n^\alpha).
\end{equation}
For \ $m=-1$ \ this assertion follows from Lemma \ref{ASL4}. Assume it is true 
for certain \ $m\geq -1$ \ and let 
\[
z'\in \Pol(m+1)+\o(n^\alpha). 
\]
Then there exist \ $a\in\R$ \ and \ $w\in\Pol(m)+\o(n^\alpha)$ \ such that 
\[ 
z'_n=an^{m+1}+w_n. 
\]
Since
\[
a=\frac{a}{1+c}+\frac{ca}{1+c},\quad (n+k)^{m+1}=n^{m+1}+r_n,\quad 
r\in \Pol(m)
\]
we obtain 
\[
w_n=z'_n-an^{m+1}=x_n-\frac{a}{1+c}n^{m+1}+cx_{n+k}-\frac{ca}{1+c}n^{m+1}
\]
\[
=\left(x_n-\frac{a}{1+c}n^{m+1}\right)+
c\left(x_{n+k}-\frac{a}{1+c}(n+k)^{m+1}+\frac{a}{1+c}r_n\right).
\]
Let
\[
v_n=x_n-\frac{a}{1+c}n^{m+1}. 
\]
Then
\[ 
w_n-\frac{ca}{1+c}r_n=v_n+cv_{n+k}.
\]
Since \ $r,w\in \Pol(m)+\o(n^\alpha)$ \ we obtain 
\[
\left(w-\frac{ca}{1+c}r\right)\in \Pol(m)+\o(n^\alpha).
\]
The condition \ $z'\in \Pol(m)+\o(n^\alpha)$ \ implies \ $z'=\O(n^m)$. \ Hence, 
by Lemma \ref{ASL4}, \ $x_n=\O(n^m)$. \ Therefore \ $v\in \O(n^\infty)$ \ and, 
by inductive hypothesis, 
\[
v\in \Pol(m)+\o(n^\alpha). 
\]
By the equality 
\[
x_n=v_n+\frac{a}{1+c}n^{m+1},
\]
we have 
\[
x\in\Pol(m+1)+\o(n^\alpha). 
\]
Now, assume 
\[
z\in\Pol(m)+\o(n^\alpha). 
\]
Since \ $\alpha\leq m$ \ we have \ $z_n=\O(n^m)$ \ and, by Lemma \ref{ASL4}, \  $x_n=\O(n^m)$. \ Hence \ $x_{n+k}=\O(n^m)$ \ and from the condition \  $u_n=c+\o(n^{\alpha-m})$ \ we obtain 
\[
z'_n-z_n=(c-u_n)x_{n+k}=
n^\alpha\frac{c-u_n}{n^{\alpha-m}}\frac{x_{n+k}}{n^m}=n^\alpha\o(1)\O(1)=\o(n^\alpha).
\]
Hence the condition \ $z\in\Pol(m)+\o(n^\alpha)$ \ implies 
\[
z'_n=z_n+(z'_n-z_n)\in\Pol(m)+\o(n^\alpha)+\o(n^\alpha)=\Pol(m)+\o(n^\alpha)
\]
and the result follows from the first part of the proof.
\end{proof}

\section{Asymptotically polynomial solutions 1}

In this section, in Theorem \ref{T1}, we obtain our main result. First, in 
Lemma \ref{BHL}, we obtain a certain discrete version of the Bihari's lemma. 
This version is similar to Theorem 1 in \cite{Demidovic} but we do not assume 
the continuity of \ $g$. 

\begin{lemma}\label{BHL}
Assume \ $a,w$ \ are nonnegative sequences, \ $p\in\N$, 
\[
g:[0,\infty)\to[0,\infty), \quad 0\leq\lambda<M, \quad g(\lambda)>0, 
\]
\begin{equation}\label{sum ak}
\sum_{k=0}^\infty a_k\leq\int_{\lambda}^M\frac{dt}{g(t)}, 
\end{equation}
\[
w_n\leq\lambda+\sum_{k=p}^{n-1}a_kg(w_k)
\]
for \ $n\geq p$ \ and \ $g$ \ is nondecreasing. Then \ $w_n\leq M$ \ for \ $n\geq p$. 
\end{lemma}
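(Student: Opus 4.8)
The plan is to follow the classical Bihari scheme in its discrete form, replacing the differential inequality by a one-step difference estimate for the primitive of $1/g$. First I would introduce the comparison function
\[
G(u)=\int_{\lambda}^{u}\frac{dt}{g(t)}.
\]
Since $g$ is nondecreasing and $g(\lambda)>0$, we have $g(t)\ge g(\lambda)>0$ for every $t\ge\lambda$, so $1/g$ is bounded by $1/g(\lambda)$ and monotone, hence Riemann integrable on bounded subintervals of $[\lambda,\infty)$ even though $g$ need not be continuous; this is where the hypothesis ``no continuity of $g$'' is accommodated. Consequently $G$ is well defined, continuous and \emph{strictly} increasing on $[\lambda,\infty)$, with $G(\lambda)=0$ and $G(M)=\int_\lambda^M dt/g(t)$ finite (indeed $G(M)\le(M-\lambda)/g(\lambda)$). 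The whole argument will reduce to proving $G(W_N)\le G(M)$ for a suitable majorant sequence $W$, after which strict monotonicity of $G$ gives $W_N\le M$.

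Next I would pass from $w$ to its explicit right-hand side by setting
\[
W_n=\lambda+\sum_{k=p}^{n-1}a_kg(w_k),\qquad n\ge p,
\]
so that the hypothesis reads $w_n\le W_n$, while $W_p=\lambda$ and $W_{n+1}=W_n+a_ng(w_n)$. Because $a$ and $g$ are nonnegative, $W$ is nondecreasing; in particular $W_n\ge\lambda$ for all $n\ge p$, so $g(W_n)\ge g(\lambda)>0$ is available throughout and the estimates below never divide by zero.

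The heart of the matter is the one-step inequality $G(W_{n+1})-G(W_n)\le a_n$, valid for every $n\ge p$ with no induction required. To obtain it I would write
\[
G(W_{n+1})-G(W_n)=\int_{W_n}^{W_{n+1}}\frac{dt}{g(t)}\le\frac{W_{n+1}-W_n}{g(W_n)}=\frac{a_ng(w_n)}{g(W_n)},
\]
where the inequality uses $g(t)\ge g(W_n)$ on $[W_n,W_{n+1}]$ by monotonicity, and then conclude using $w_n\le W_n$, whence $g(w_n)\le g(W_n)$, so the final fraction is at most $a_n$. This double appeal to monotonicity of $g$ — once to bound the integrand from below over the integration interval, and once to compare $g(w_n)$ with $g(W_n)$ — is the step I expect to demand the most care, since it is precisely here that the assumptions ``$g$ nondecreasing'' and ``$g(\lambda)>0$'' are indispensable.

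With the one-step bound established, the conclusion is routine. Telescoping from $n=p$ to $n=N-1$ and using $G(\lambda)=0$ gives
\[
G(W_N)=\sum_{n=p}^{N-1}\bigl(G(W_{n+1})-G(W_n)\bigr)\le\sum_{n=p}^{N-1}a_n\le\sum_{k=0}^{\infty}a_k\le\int_{\lambda}^{M}\frac{dt}{g(t)}=G(M).
\]
Since $G$ is strictly increasing on $[\lambda,\infty)$, this forces $W_N\le M$, and therefore $w_N\le W_N\le M$ for every $N\ge p$, which is the assertion.
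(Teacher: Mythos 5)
Your proposal is correct and follows essentially the same route as the paper: the paper's proof introduces the same majorant $s_n=\lambda+\sum_{k=p}^{n-1}a_kg(w_k)$ (your $W_n$), derives the same one-step bound $\int_{s_n}^{s_{n+1}}dt/g(t)\le a_n$ from the monotonicity of $g$, and telescopes against \eqref{sum ak}. The only cosmetic difference is that the paper first replaces $g(w_n)$ by $g(s_n)$ in the increment and then estimates the integral, whereas you carry $g(w_n)$ through and compare it with $g(W_n)$ at the end; the substance is identical.
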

\begin{proof}   
For \ $n\geq p$, \ let 
\[ 
s_n=\lambda+\sum_{k=p}^{n-1}a_kg(w_k). 
\]
Then, for \ $n\geq p$, \ we have \ $\D s_n=s_{n+1}-s_n=a_ng(w_n)\leq a_ng(s_n)$ \ and  
\[
\int\limits_{s_n}^{s_{n+1}}\frac{dt}{g(t)}\leq
\int\limits_{s_n}^{s_{n+1}}\frac{dt}{g(s_n)}=\frac{\D s_n}{g(s_n)}\leq a_n.
\]
Therefore, using \eqref{sum ak}, we have   
\[
\int\limits_{\lambda}^{s_n}\frac{dt}{g(t)}=\sum_{k=p}^{n-1}
\int\limits_{s_k}^{s_{k+1}}\frac{dt}{g(t)}\leq
\sum_{k=p}^{n-1}a_k\leq\int\limits_{\lambda}^M\frac{dt}{g(t)}.
\]
Since \ $g$ \ is positive on \ $[\lambda,\infty)$, \ we obtain \ $s_n\leq M$. \ Hence 
\[
w_n\leq s_n\leq M 
\]
for \ $n\geq p$. \ The proof is complete. 
\end{proof}
In the proof of Theorem \ref{T1} we also use the following two lemmas.

\begin{lemma}\label{SDL} 
Assume \ $m\in\N(1)$, \ $z\in\SQ$, \ $s\in(-\infty,m-1]$ \ and 
\[
\sum_{n=1}^\infty n^{m-1-s}|\D^mz_n|<\infty.
\]
Then \ $z\in\Pol(m-1)+\o(n^s)$.
\end{lemma}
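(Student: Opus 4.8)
The plan is to represent $z$ by the discrete Taylor (iterated summation by parts) formula about a fixed base point $N$, separating a genuine polynomial part from a summed remainder. For $n\ge N$ one has
\[
z_n=\sum_{j=0}^{m-1}\binom{n-N}{j}\D^j z_N+\sum_{k=N}^{n-1}\binom{n-1-k}{m-1}\D^m z_k ,
\]
which follows by induction on $m$. The first sum is a polynomial in $n$ of degree $<m$, hence lies in $\Pol(m-1)$ and is absorbed into the target space. Everything then reduces to showing that the remainder $R_n=\sum_{k=N}^{n-1}\binom{n-1-k}{m-1}\D^m z_k$ belongs to $\Pol(m-1)+\o(n^s)$.

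The next step is to expand the kernel as a polynomial in $n$,
\[
\binom{n-1-k}{m-1}=\sum_{i=0}^{m-1}c_i(k)\,n^i ,
\]
where each $c_i$ is a polynomial in $k$ of degree $m-1-i$, so $|c_i(k)|\le C\,k^{m-1-i}$ for $k\ge1$. This gives $R_n=\sum_{i=0}^{m-1}n^i\sum_{k=N}^{n-1}c_i(k)\D^m z_k$, and I would split the exponents according to whether $i\ge s$ or $i<s$; since $i$ is an integer and $s$ real, exactly one case applies to each $i$. When $i\ge s$ we have $m-1-i\le m-1-s$, so the hypothesis forces $\sum_k|c_i(k)\D^m z_k|<\infty$; thus $C_i:=\sum_{k=N}^{\infty}c_i(k)\D^m z_k$ exists and $\sum_{k=N}^{n-1}c_i(k)\D^m z_k=C_i-\sum_{k\ge n}c_i(k)\D^m z_k$. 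The tail is controlled by
\[
n^{i-s}\sum_{k\ge n}k^{m-1-i}|\D^m z_k|\le\sum_{k\ge n}k^{m-1-s}|\D^m z_k|\to0 ,
\]
so this term equals $C_in^i+\o(n^s)$: a surviving polynomial coefficient plus an admissible error. When $i<s$ I keep the partial sum and must show $\sum_{k=N}^{n-1}c_i(k)\D^m z_k=\o(n^{s-i})$.

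The main obstacle is precisely the sharpness of the error in the case $i<s$: a crude estimate only yields $\O(n^{s-i})$, whereas the lemma demands little-$o$. The decisive device is a dominated-convergence estimate. Writing $b_k=k^{m-1-s}|\D^m z_k|$, so that $\sum_k b_k<\infty$, the relevant quantity is $n^{i-s}\sum_{k=N}^{n-1}k^{m-1-i}|\D^m z_k|=\sum_{k=N}^{n-1}(k/n)^{s-i}b_k$, and since $s-i>0$ I would split the range at $k\le\sqrt n$ and $k>\sqrt n$: on the short range the prefactor $(k/n)^{s-i}\le n^{-(s-i)/2}$ vanishes against the bounded $\sum b_k$, while on the long range $(k/n)^{s-i}\le1$ multiplies the vanishing tail $\sum_{k>\sqrt n}b_k$. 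Hence the whole expression tends to $0$, giving the required $\o(n^{s-i})$. Summing the contributions over all $i$, the surviving constants $C_i$ (those with $i\ge s$) assemble into a single polynomial of degree $<m$, and all remaining pieces lie in $\o(n^s)$; together with the Taylor polynomial this yields $z\in\Pol(m-1)+\o(n^s)$. The only delicate bookkeeping is tracking which powers $n^i$ produce genuine polynomial coefficients and which are swallowed by the error.
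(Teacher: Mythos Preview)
Your argument is correct. The discrete Taylor expansion, the polynomial expansion of the kernel $\binom{n-1-k}{m-1}$ in powers of $n$, and the two-case analysis (with the $\sqrt{n}$-splitting for the delicate $i<s$ case) all go through as written; the identity $n^{i-s}k^{m-1-i}=(k/n)^{s-i}k^{m-1-s}$ is exactly what makes the dominated-convergence style estimate work.

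The paper does not actually prove this lemma: it simply invokes the proof of Theorem~2.1 in \cite{J Migda 2013}. So your contribution is a fully self-contained argument where the paper defers to an external reference. The cited proof in \cite{J Migda 2013} proceeds via an iterated ``remainder operator'' construction rather than the direct Taylor-with-kernel expansion you use, but the underlying analytic content (turning summability of $n^{m-1-s}|\Delta^m z_n|$ into a polynomial plus an $\o(n^s)$ tail) is the same. Your route has the advantage of being elementary and transparent about where each polynomial coefficient comes from; the price is the somewhat ad hoc $\sqrt{n}$ split, which the operator approach avoids by packaging the estimate more abstractly.
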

\begin{proof}
The assertion follows from the proof of Theorem 2.1 in \cite{J Migda 2013}.
\end{proof}

\begin{lemma}\label{BHL2}
If \ $x\in\SQ$ \ and \ $m,n_0\in\N$, \ then there exists \ $L>0$ \ such that
\[
|x_n|\leq n^{m-1}\left(L+\sum_{i=n_0}^{n-1}|\Delta^mx_i|\right) \qquad 
\text{for} \qquad n\geq n_0.
\]
\end{lemma}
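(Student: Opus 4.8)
The plan is to argue by induction on $m$, with the base case handled by telescoping and the inductive step reducing the estimate for $\Delta^{m+1}$ to the one for $\Delta^{m}$ applied to the first difference of $x$. For $m=1$ the identity $x_n=x_{n_0}+\sum_{i=n_0}^{n-1}\Delta x_i$ gives $|x_n|\le |x_{n_0}|+\sum_{i=n_0}^{n-1}|\Delta x_i|$ directly, and since $n^{m-1}=n^{0}=1$ the constant $L=|x_{n_0}|$ works.

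For the inductive step I assume the statement for $m$ and set $y=\Delta x$, so that $\Delta^{m}y=\Delta^{m+1}x$. Applying the inductive hypothesis to $y$ (with the same $n_0$) furnishes a constant $L'>0$ with
\[
|y_n|\le n^{m-1}\Bigl(L'+\sum_{i=n_0}^{n-1}|\Delta^{m+1}x_i|\Bigr),\qquad n\ge n_0.
\]
Telescoping once more yields $x_n=x_{n_0}+\sum_{j=n_0}^{n-1}y_j$, and I substitute the bound for $|y_j|$ into $|x_n|\le|x_{n_0}|+\sum_{j=n_0}^{n-1}|y_j|$.

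It remains to absorb the resulting double sum into the single factor $n^{m}=n^{(m+1)-1}$. Enlarging the inner range from $\{n_0,\dots,j-1\}$ to $\{n_0,\dots,n-1\}$ lets the double sum factor as $\bigl(\sum_{j=n_0}^{n-1}j^{m-1}\bigr)\bigl(\sum_{i=n_0}^{n-1}|\Delta^{m+1}x_i|\bigr)$; the elementary comparison $\sum_{j=n_0}^{n-1}j^{m-1}\le n^{m}$ (each summand is at most $n^{m-1}$ and there are fewer than $n$ of them) then produces the desired factor in front of the difference sum, and bounds the stray term $L'\sum_j j^{m-1}$ by $L'n^{m}$. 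Since $n\ge n_0\ge 1$ gives $n^m\ge 1$, the constant $|x_{n_0}|$ is at most $|x_{n_0}|n^m$, so taking $L=|x_{n_0}|+L'$ closes the induction. The only genuinely delicate point is this comparison step, which is routine; alternatively one could avoid the induction entirely by invoking the discrete Taylor formula $x_n=\sum_{j=0}^{m-1}\binom{n-n_0}{j}\Delta^{j}x_{n_0}+\sum_{i=n_0}^{n-1}\binom{n-i-1}{m-1}\Delta^{m}x_i$ and bounding each binomial coefficient by a constant multiple of $n^{m-1}$.
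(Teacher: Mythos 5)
Your induction is correct, and it is worth noting that the paper itself gives no argument at all for this lemma: it simply cites Lemma~7.3 of the author's earlier paper \emph{Approximative solutions of difference equations}, where the bound is obtained essentially via the discrete Taylor expansion you mention at the end (writing $x_n$ as a polynomial part determined by $\Delta^jx_{n_0}$, $j\le m-1$, plus a sum of terms $\binom{n-i-1}{m-1}\Delta^mx_i$, and bounding the binomial coefficients by $n^{m-1}$). So your inductive argument is a genuinely self-contained alternative: it trades the one-shot summation formula for a step that reduces $\Delta^{m+1}$ to $\Delta^m$ applied to $\Delta x$, at the cost of the (correctly handled) crude comparison $\sum_{j=n_0}^{n-1}j^{m-1}\le n^m$, which is exactly where the extra factor of $n$ per induction step comes from. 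The only blemish is cosmetic: in the base case $L=|x_{n_0}|$ may be zero, whereas the statement asks for $L>0$; replace it by $L=|x_{n_0}|+1$ (the inductive step then automatically produces positive constants). Everything else--the enlargement of the inner summation range, the factorization of the double sum, and the absorption of $|x_{n_0}|$ using $n^m\ge 1$--is sound.
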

\begin{proof}
See \cite[Lemma 7.3]{J Migda 2014}.
\end{proof}
Now we are ready to prove our main result.

\begin{theorem}\label{T1}
Assume \ $m\in\N$, \ $k\in\Z$, \ $c,s,p\in\R$, \ $|c|\neq 1$, \ $s\leq m-1$, \ 
$a,b,u\in\SQ$, 
\[
f:\N\times\R\to\R, \qquad g:[0,\infty)\to[0,\infty), \qquad 
\sigma:\N\to\Z, \qquad \sigma(n)\to\infty,
\]
\[
\sum_{n=1}^\infty n^{m-1-s}|a_n|<\infty, \qquad 
\sum_{n=1}^\infty n^{m-1-s}|b_n|<\infty, \qquad u_n=c+\o(n^{s+1-m}),
\]
$x$ \ is a solution of \eqref{E} and one of the following conditions is satisfied: 
\begin{enumerate}

\item[$(a)$] \ $g$ \ is nondecreasing, \ $f$ \ is \ $(g,m-1)$-bounded, \ 
$\sigma(n)\leq n$ \ for large \ $n$, 
\[
\int_1^\infty\frac{dt}{g(t)}=\infty, 
\]
and \ $x$ \ is \ $(u,k)$-nonoscillatory, 

\item[$(b)$] \ $g$ \ is \smallskip locally bounded, \ $f$ \ is \ $(g,p)$-bounded, \ 
               $x\circ\sigma=\O(n^p)$ \ and the following alternative is satisfied: \ 
               $k(|c|-1)\geq 0$ \ or \ $x\in\O(n^\infty)$ \ or \ 
               $x$ \ is \ $(u,k)$-nonoscillatory,

\item[$(c)$] \ $f$ \ is \smallskip bounded and the following alternative is satisfied: \ 
               $k(|c|-1)\geq 0$ \ or \ $x\in\O(n^\infty)$ \ or \ 
               $x$ \ is \ $(u,k)$-nonoscillatory.
\end{enumerate}
Then 
\begin{equation}\label{aspol}
x\in\Pol(m-1)+\o(n^s).
\end{equation}
\end{theorem}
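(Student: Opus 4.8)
The plan is to follow the roadmap sketched in the introduction: pass to the auxiliary sequence $z$ defined by $z_n=x_n+u_nx_{n+k}$, for which \eqref{E} reads $\D^mz_n=a_nf(n,x_{\sigma(n)})+b_n$, reduce the whole theorem to the single weighted summability statement
\[
\sum_{n=1}^\infty n^{m-1-s}|\D^mz_n|<\infty,
\]
and then chain the two black-box lemmas. Indeed, once this holds, Lemma \ref{SDL} gives $z\in\Pol(m-1)+\o(n^s)$, and Lemma \ref{ASL5}, applied with its ``$m$'' equal to $m-1$ and its ``$\alpha$'' equal to $s$ (so that its hypothesis $u_n=c+\o(n^{\alpha-m})$ becomes exactly the assumed $u_n=c+\o(n^{s+1-m})$), upgrades this to the desired $x\in\Pol(m-1)+\o(n^s)$. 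Thus the proof rests on two tasks: (i) establishing the weighted summability in each of the three cases, and (ii) verifying the alternative hypothesis of Lemma \ref{ASL5}.

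Since $s\le m-1$ we have $n^{m-1-s}\ge 1$, so the assumptions yield in particular $\sum|a_n|<\infty$ and $\sum|b_n|<\infty$. Because $\D^mz_n=a_nf(n,x_{\sigma(n)})+b_n$, task (i) follows at once whenever $f$ is bounded along the solution, via
\[
\sum_{n}n^{m-1-s}|\D^mz_n|\le\Big(\sup_n|f(n,x_{\sigma(n)})|\Big)\sum_n n^{m-1-s}|a_n|+\sum_n n^{m-1-s}|b_n|.
\]
In case $(c)$ this is immediate. In case $(b)$ the $(g,p)$-boundedness gives $|f(n,x_{\sigma(n)})|\le g(|x_{\sigma(n)}|/n^p)$; the hypothesis $x\circ\sigma=\O(n^p)$ keeps the argument $|x_{\sigma(n)}|/n^p$ in a bounded set, on which the locally bounded $g$ is bounded, so again $f$ is bounded along the solution. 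Hence cases $(b)$ and $(c)$ require no further work for step (i).

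The main obstacle is case $(a)$, where no a priori growth bound on $x$ is available and the condition $\int_1^\infty dt/g(t)=\infty$ must be activated through the Bihari-type Lemma \ref{BHL}. Here I would exploit $(u,k)$-nonoscillation, which forces $x_n$ and $u_nx_{n+k}$ to share a sign for large $n$, whence $|x_n|\le|z_n|$. Applying Lemma \ref{BHL2} to $z$ produces $L>0$ and $n_0$ with $|z_n|\le n^{m-1}W_n$, where $W_n=L+\sum_{i=n_0}^{n-1}|\D^mz_i|$ is nondecreasing. For large $n$ one has $n_0\le\sigma(n)\le n$, so $|x_{\sigma(n)}|\le|z_{\sigma(n)}|\le\sigma(n)^{m-1}W_{\sigma(n)}\le\sigma(n)^{m-1}W_n$, giving $|x_{\sigma(n)}|/n^{m-1}\le W_n$; the $(g,m-1)$-boundedness together with the monotonicity of $g$ then yields $|\D^mz_n|\le|a_n|g(W_n)+|b_n|$. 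Summing this inequality converts it into
\[
W_n\le\lambda+\sum_{i=n_0}^{n-1}|a_i|g(W_i),\qquad \lambda=L+\sum_{i=n_0}^\infty|b_i|<\infty,
\]
which is exactly the hypothesis of Lemma \ref{BHL}. Since $\sum|a_i|<\infty$ while $\int_1^\infty dt/g(t)=\infty$, I can fix $M$ so large that $\sum_{i=n_0}^\infty|a_i|\le\int_\lambda^M dt/g(t)$ (after harmlessly enlarging $\lambda$ to a value where $g>0$, the degenerate case $g\equiv0$ being trivial), and conclude $W_n\le M$ for $n\ge n_0$. Then $g(W_n)\le g(M)$, so $|\D^mz_n|\le g(M)|a_n|+|b_n|$ and the weighted summability follows as before.

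For task (ii): in cases $(b)$ and $(c)$ the alternative of Lemma \ref{ASL5} is either assumed outright ($k(|c|-1)\ge0$ or $x\in\O(n^\infty)$) or reduces to the nonoscillatory subcase. In that subcase, and throughout case $(a)$, I note that $z\in\Pol(m-1)+\o(n^s)$ with $s\le m-1$ already gives $z=\O(n^{m-1})$, so $|x_n|\le|z_n|$ forces $x=\O(n^{m-1})\subset\O(n^\infty)$; this verifies the $x\in\O(n^\infty)$ branch of Lemma \ref{ASL5} and finishes the argument. I expect the only genuinely delicate point to be the bookkeeping in case $(a)$: choosing a single $n_0$ large enough for all the ``for large $n$'' hypotheses ($\sigma(n)\le n$, $\sigma(n)\ge n_0$, $(u,k)$-nonoscillation, validity of \eqref{E}) to hold simultaneously, and securing $g(\lambda)>0$ so that Lemma \ref{BHL} applies; everything else is routine substitution into the three cited lemmas.
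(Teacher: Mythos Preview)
Your proposal is correct and follows essentially the same route as the paper: reduce everything to weighted summability of $\D^mz$, then invoke Lemma~\ref{SDL} and Lemma~\ref{ASL5}, handling the alternative hypothesis of the latter via $|x_n|\le|z_n|$ in the nonoscillatory case. The only cosmetic difference is that in case~$(a)$ the paper applies Lemma~\ref{BHL} to the sequence $|z_{\sigma(n)}|/n^{m-1}$ and then reduces to case~$(b)$ with $p=m-1$, whereas you apply it directly to the partial-sum sequence $W_n$ and read off the weighted summability without the detour; both are valid and rest on the same estimate.
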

\begin{proof}
Let \ $z\in\SQ$, 
\[
z_n=x_n+u_nx_{n+k} 
\]
for large \ $n$. \ Assume (b). Since \  $x\circ\sigma=\O(n^p)$ \ and \ $f$ \ is \ $(g,p)$-bounded, we see that the sequence \ $(f(n,x_{\sigma(n)}))$ \ is bounded. Hence 
\[
\sum_{n=1}^\infty n^{m-1-s}|\D^mz_n|<\infty
\]
and, by Lemma \ref{SDL}, we have \ $z\in\Pol(m-1)+\o(n^s)$. \ 
If \ $x$ \ is \ $(u,k)$-nonoscillatory, then 
\[ 
|z_n|=|x_n+u_nx_{n+k}|=|x_n|+|u_nx_{n+k}| 
\]
for large $n$. Hence 
\begin{equation}\label{xnzn}
|x_n|\leq|z_n|
\end{equation}
for large $n$. Therefore \ $x\in\O(n^\infty)$. \ Now, using Lemma \ref{ASL5}, we 
obtain \eqref{aspol}. The proof in the case (c) is analogous. 
\medskip\\ 
Assume (a). There exists an index \ $n_0$ \ such that 
\[ 
|x_n|\leq|z_n|, \qquad \sigma(n)\geq 1, \qquad \sigma(n)\leq n
\]
and \eqref{E} is satisfied for \ $n\geq n_0$. \ 
Choose an index \ $n_1\geq n_0$ \ such that \ $\sigma(n)\geq n_0$ \ for \ $n\geq n_1$. \ 
By Lemma \ref{BHL2}, there exists a positive constant \ $L$ \ such that 
\begin{equation}\label{BHz}
\frac{|z_n|}{n^{m-1}}\leq L+\sum_{j=1}^{n-1}|\Delta^mz_j|
\end{equation}
for any $n$. Let 
\[
L_1=L+\sum_{j=1}^{n_1}|\Delta^mz_j|, \qquad 
L_2=L_1+\sum_{j=1}^\infty|b_j|. 
\]
If \ $n\geq n_1$, \ then, using \eqref{BHz}, \eqref{E}, $(g,m-1)$-boundedness 
of \ $f$, \ and \eqref{xnzn}, we obtain 
\[
\frac{|z_{\sigma(n)}|}{n^{m-1}}\leq\frac{|z_{\sigma(n)}|}{\sigma(n)^{m-1}}\leq
L+\sum_{j=1}^{\sigma(n)-1}|\Delta^mz_j|\leq L+\sum_{j=1}^{n-1}|\D^mz_j|
\]
\[
\leq L_1+\sum_{j=n_1}^{n-1}|\D^mz_j|\leq
L_2+\sum_{j=n_1}^{n-1}|a_j|g\left(\frac{|x_{\sigma(j)}|}{j^{m-1}}\right)\leq
L_2+\sum_{j=n_1}^{n-1}|a_j|g\left(\frac{|z_{\sigma(j)}|}{j^{m-1}}\right).
\]
By Lemma \ref{BHL}, the sequence \ $(z_{\sigma(n)}/n^{m-1})$ \ is bounded. 
Hence, by $(\ref{xnzn})$, 
\[
x\circ\sigma=\O(n^{m-1}). 
\]
Therefore, taking \ $p=m-1$ in (b), we obtain \eqref{aspol}. The proof is complete. 
\end{proof}

\begin{remark}
The condition \ $x\in\O(n^\infty)$ \ is not a consequence of \ 
$x\circ\sigma\in\O(n^\infty)$. \ For example, if \ $x_n=e^n$, \ 
$\sigma(n)=\lfloor\log n\rfloor$ \ $($integer part of \ $\log n$$)$, \ then \ 
$x\circ\sigma=\O(n)$ \ and \ $x\notin\O(n^\infty)$.
\end{remark} 

\begin{remark} 
If the sequence \ $u$ \ is nonnegative, then the class of \ $(u,k)$-nonoscillatory  sequences is larger than the class of nonoscillatory sequences. Moreover, if 
\[
n_1=\min\{n\in\N:\ \sigma(i)\geq 1 \quad \text{for} \quad i\geq n\} 
\]
and we define a full solution of \eqref{E} as a sequence \ $x$ \ such that \eqref{E} 
is satisfied for all \ $n\geq\max(n_1,-k)$, \ then the set of full solutions is 
a subset of the set of all solutions. Hence Theorem \ref{T1} covers the case of full  solutions and, assuming \ $u$ \ is nonnegative, the case of nonoscillatory solutions. 
\end{remark}

\begin{lemma}\label{inf}
If \ $m\in\N(0)$, \ then 
\[
\Pol(m-1)+\o(n^{-\infty})=\bigcap_{k=1}^\infty\left(\Pol(m-1)+\o(n^{-k})\right).
\]
\end{lemma}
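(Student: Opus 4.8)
The plan is to prove the two inclusions separately; write $A=\Pol(m-1)+\o(n^{-\infty})$ and $B=\bigcap_{k=1}^\infty\left(\Pol(m-1)+\o(n^{-k})\right)$. First I would dispatch $A\subseteq B$, which is immediate from the definitions: if $x=\varphi+\psi$ with $\varphi\in\Pol(m-1)$ and $\psi\in\o(n^{-\infty})=\bigcap_{k}\o(n^{-k})$, then $\psi\in\o(n^{-k})$ for every $k$, so $x\in\Pol(m-1)+\o(n^{-k})$ for every $k$, i.e.\ $x\in B$.

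The substance is the reverse inclusion $B\subseteq A$. Given $x\in B$, for each $k\in\N(1)$ I would fix a decomposition $x=\varphi_k+\psi_k$ with $\varphi_k\in\Pol(m-1)$ and $\psi_k\in\o(n^{-k})$. The key step is to show that the polynomial parts do not depend on $k$. For any indices $j,k$ one has $\varphi_j-\varphi_k=\psi_k-\psi_j\in\Pol(m-1)$, and since each $\psi_i\in\o(n^{-i})\subseteq\o(1)$ tends to $0$, the difference $\psi_k-\psi_j$ tends to $0$ as well. Thus $\varphi_j-\varphi_k$ is a polynomial sequence of degree less than $m$ that is $\o(1)$, and I would invoke the elementary fact that the only such sequence is $0$, concluding $\varphi_j=\varphi_k$. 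Writing $\varphi$ for this common value, I then have $x-\varphi=\psi_k\in\o(n^{-k})$ for every $k$, whence $x-\varphi\in\bigcap_k\o(n^{-k})=\o(n^{-\infty})$ and $x=\varphi+(x-\varphi)\in A$.

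The one point deserving explicit justification — the main (though mild) obstacle — is the claim that $\Pol(m-1)\cap\o(1)=\{0\}$. I would handle it by cases: for $m=0$ the space $\Pol(-1)=0$ is trivial and there is nothing to check, while for $m\geq1$ a nonzero element of $\Pol(m-1)$ has the form $a_dn^d+\dots$ with leading coefficient $a_d\neq0$ and $0\le d\le m-1$; its modulus tends to $\infty$ when $d\ge1$ and equals the nonzero constant $|a_0|$ when $d=0$, so in neither case does it tend to $0$. Combining the two inclusions gives the asserted equality $A=B$.
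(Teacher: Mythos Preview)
Your proof is correct and follows essentially the same approach as the paper: both arguments reduce the nontrivial inclusion to the fact that $\Pol(m-1)\cap\o(1)=\{0\}$, which forces the polynomial part of any decomposition $x=\varphi+\psi$ with $\psi\in\o(n^{-k})\subset\o(1)$ to be independent of $k$. The paper phrases this as uniqueness of a single decomposition in $\Pol(m-1)+\o(1)$, whereas you compare the decompositions pairwise, but the logic is identical; your version is a bit more explicit in justifying $\Pol(m-1)\cap\o(1)=\{0\}$, which the paper simply asserts.
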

\begin{proof}
Let \ $P=\Pol(m-1)$ \ and 
\[ 
x\in\bigcap\limits_{k=1}^\infty\left(P+\o(n^{-k})\right). 
\] 
Then \ $x\in P+\o(1)$ \ and \ $x=\varphi+u$ \ for some \ $\varphi\in P$ \ 
and \ $u\in\o(1)$. \ Since \ $P\cap\o(1)=0$, \ the sequences \ $\varphi$ \ and \ 
$u$ \ are unique. Let \ $k\in\N$. \ Then \ $x\in P+\o(n^{-k})$ \ and by uniqueness 
of \ $u\in\o(1)$ \ we have \ $u\in\o(n^{-k})$. \ Hence \ $u\in\o(n^{-\infty})$ \ and 
we obtain
\[
\bigcap_{k=1}^\infty\left(P+\o(n^{-k})\right)\subset P+\o(n^{-\infty}).
\]
The inverse inclusion is obvious. 
\end{proof}

\begin{corollary} 
Assume all conditions of Theorem \ref{T1} are satisfied and \ $a,b\in\o(n^{-\infty})$. \  Then 
\[
x\in\Pol(m-1)+\o(n^{-\infty}).
\]
\end{corollary}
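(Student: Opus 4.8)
The plan is to derive the Corollary from Theorem \ref{T1} by letting the approximation exponent run down to $-\infty$ and then repackaging the resulting conclusions with Lemma \ref{inf}. With $P = \Pol(m-1)$, Lemma \ref{inf} gives
\[
P + \o(n^{-\infty}) = \bigcap_{k=1}^\infty\bigl(P + \o(n^{-k})\bigr),
\]
so it is enough to prove that $x \in P + \o(n^{-k})$ for every $k \in \N$. There is no need to track the polynomial part across the different $k$: the uniqueness step in the proof of Lemma \ref{inf} (which rests on $P \cap \o(1) = 0$) automatically supplies a single $\varphi \in P$ with $x - \varphi \in \o(n^{-k})$ for all $k$ at once, that is, $x - \varphi \in \o(n^{-\infty})$.

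Next I would fix $k \in \N$, put $s = -k$, and reapply Theorem \ref{T1} with this value. Since $m \ge 1$ we have $-k \le 0 \le m-1$, so $s = -k$ is an admissible exponent, and the work reduces to checking that each hypothesis survives when the original exponent is lowered to $-k$. The qualitative hypotheses gathered in $(a)$, $(b)$, $(c)$ --- monotonicity or local boundedness of $g$, the $(g,\cdot)$-boundedness of $f$, the requirement $\sigma(n) \le n$, the divergence of $\int_1^\infty dt/g(t)$, and the growth/oscillation type of $x$ --- depend only on the fixed data $g,f,\sigma,p$ and on $x$ itself, never on the exponent; hence whichever of $(a)$, $(b)$, $(c)$ held originally continues to hold verbatim. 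The two summability conditions now read $\sum n^{m-1+k}|a_n| < \infty$ and $\sum n^{m-1+k}|b_n| < \infty$, and these hold precisely because $a,b \in \o(n^{-\infty})$: from $a \in \o(n^{-\infty})$ one gets $n^{m-1+k}a_n = \o(n^{-2})$, which is summable. This is the single place where the extra hypothesis $a,b \in \o(n^{-\infty})$ is consumed.

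The delicate point, and the one I expect to be the real obstacle, is the neutral-coefficient hypothesis, which at the level $s = -k$ becomes $u_n = c + \o(n^{-k+1-m})$. The weaker condition $u_n = c + \o(n^{s_0+1-m})$ attached to the original exponent $s_0$ does not yield this faster decay as $k$ grows, yet Lemma \ref{ASL5} needs exactly this decay in order to transfer an approximation of $z_n = x_n + u_n x_{n+k}$ back to $x$. What the argument genuinely requires is that the $u$-hypothesis hold at \emph{every} admissible level, i.e.\ $u - c \in \o(n^{-\infty})$; this is indispensable. For example, take $m = 1$, $k = 1$, $c = 2$ and $u_n = 2 + n^{-1}$: every solution of $\Delta(x_n + u_n x_{n+1}) = 0$ has $x_n + u_n x_{n+1} \equiv C$, and then $x_n - C/(1+c)$ is $\O(n^{-1})$ but not $\o(n^{-1})$ whenever $C \neq 0$, so $x \notin \Pol(0) + \o(n^{-1})$. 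Thus the plan succeeds exactly when $u - c$ decays faster than every power, and I would read the phrase ``all conditions of Theorem \ref{T1} are satisfied'' in the Corollary as meaning that its hypotheses hold for all admissible $s$.

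Granting this, the conclusion is immediate: for each $k$, Theorem \ref{T1} applied with $s = -k$ yields $x \in \Pol(m-1) + \o(n^{-k})$, and the intersection formula of Lemma \ref{inf} then gives $x \in \Pol(m-1) + \o(n^{-\infty})$, as claimed.
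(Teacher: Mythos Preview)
Your argument follows exactly the route the paper takes: invoke Theorem \ref{T1} for every negative integer exponent $s=-k$ and then collapse the resulting family of approximations via Lemma \ref{inf}. The paper's proof is the one-line ``consequence of Theorem \ref{T1} and Lemma \ref{inf}'', and your write-up simply unpacks this.

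Where you go further than the paper is in flagging the hypothesis $u_n=c+\o(n^{s+1-m})$. You are right that this condition depends on $s$, so that applying Theorem \ref{T1} at every level $s=-k$ genuinely requires $u-c\in\o(n^{-\infty})$; your counterexample with $m=1$, $c=2$, $u_n=2+n^{-1}$ confirms this is not a cosmetic issue. The paper's terse proof does not address this point, and your reading --- that ``all conditions of Theorem \ref{T1}'' must be understood as holding for every admissible $s$, hence in particular $u-c\in\o(n^{-\infty})$ --- is the natural and indeed the only way to make the corollary true as stated. So your proof is correct and is the same as the paper's in spirit, but more scrupulous about a hypothesis the paper leaves implicit.
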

\begin{proof}
The assertion is a consequence of Theorem \ref{T1} and Lemma \ref{inf}.
\end{proof}

\section{Asymptotically polynomial solutions 2}

In this section, in Theorem \ref{T2}, we obtain a result analogous to Theorem \ref{T1}. 
We replace the spaces of asymptotically polynomial sequences by the spaces of regularly 
asymptotically polynomial sequences. The study of regularly asymptotically polynomial  sequences
\[
\Pol(m)+\D^{-q}\o(1), \qquad q\in\N(0,m)
\]
is motivated by a special case \ $\Pol(m)+\D^{-m}\o(1)$. \ By Remark \ref{lambda}, 
the condition 
\[
z\in\Pol(m)+\D^{-m}\o(1)
\]
is equivalent to the convergence of the sequence \ $\D^mz$ \ and the condition 
\[
\lim_{n\to\infty}\D^mz_n=\lambda
\]
is equivalent to the condition 
\begin{equation}\label{lambda e}
\lim_{n\to\infty}\frac{p!\D^{m-p}z_n}{n^p}=\lambda \quad \text{for any} \quad 
p\in\N(0,m).
\end{equation}
Convergence of the sequence $\D^mz_n$ is comparatively easy to verify and condition  \eqref{lambda e} appears in many papers, see for example \cite{Gleska}, \cite{MJ Migda 2001},  \cite{M Migda 2006}, \cite{Zafer} or the proof of Theorem 3.1 in \cite{Wang Sun}. 

In the next lemma, we establish some basic properties of spaces of regularly 
asymptotically polynomial sequences.

\begin{lemma}\label{APL1}
Assume $m\in\N$, $k\in\N(0,m)$ and $x\in\SQ$. Then
\begin{enumerate}
\item[$(a)$] \ $x\in\D^{-m}\o(1)$ \ $\Longleftrightarrow$ \ 
               $\D^px\in\o(n^{m-p})$ \ for every \ $p\in\N(0,m)$,
\item[$(b)$] \ $x\in \Pol(m)+\D^{-k}\o(1)\ \Longleftrightarrow\                                            \D^px\in\Pol(m-p)+\o(n^{k-p})$ \ for any \ $p\in\N(0,k)$. 
\item[$(c)$] \ $\Pol(m-1)\subset\D^{-m}\o(1)\subset\o(n^m)$, \                                             $\o(n^m)\setminus\D^{-m}\o(1)\neq\emptyset$.
\item[$(d)$] \ $\D^{-m}\o(1)=\left\{z\in\o(n^m):\ \D^pz\in\o(n^{m-p}) \quad 
               \text{for any} \quad p\in\N(0,m)\right\}$.
\end{enumerate}
\end{lemma}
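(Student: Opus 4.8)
The plan is to prove the four claims (a)--(d) of Lemma \ref{APL1} in an order that lets later parts reuse earlier ones. The natural backbone is a single computational fact about how $\Delta$ interacts with the growth scales $\o(n^j)$ and the polynomial spaces $\Pol(j)$, namely that $\Delta$ lowers the exponent by one: if $y\in\o(n^j)$ then one expects $\Delta y\in\o(n^{j-1})$, and conversely a summation/telescoping argument recovers the higher scale from the lower one. So first I would isolate two auxiliary facts: (i) $\Delta$ maps $\o(n^j)$ into $\o(n^{j-1})$ (direct estimate, writing $\Delta y_n=y_{n+1}-y_n$ and using $(n+1)^{j-1}\sim n^{j-1}$), and (ii) a summation converse, that if $\Delta y\in\o(n^{j-1})$ and $j\geq 1$ then $y\in\Pol(0)+\o(n^j)$, i.e. $y_n=\text{const}+\o(n^j)$, obtained by summing $\Delta y$ and estimating the partial sums of an $\o(n^{j-1})$ sequence by $\o(n^j)$ via a Stolz/Cesàro-type bound. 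These two facts, iterated, drive everything.

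\medskip
For part (a), the forward direction is an immediate iteration of fact (i): from $\Delta^m x\in\o(1)=\o(n^0)$ apply (i) backwards is not what is needed---rather, I would prove the \emph{reverse} implication of (a) (the harder direction) by downward induction on $p$ from $p=m$ to $p=0$. The base case $p=m$ is the hypothesis $\Delta^m x\in\o(1)=\o(n^{m-m})$. For the inductive step, knowing $\Delta^{p+1}x\in\o(n^{m-p-1})$, I would apply the summation converse (ii) with $y=\Delta^p x$ and $j=m-p$ to conclude $\Delta^p x\in\Pol(0)+\o(n^{m-p})$; the point is that the $\Pol(0)$ (constant) part must actually vanish at scale $\o(n^{m-p})$ only when $m-p>0$, so one gets $\Delta^p x\in\o(n^{m-p})$ for $p<m$ after absorbing the constant, which is legitimate since a nonzero constant is $\o(n^{m-p})$ precisely when $m-p\geq 1$. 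The direction ``$\Delta^p x\in\o(n^{m-p})$ for all $p\Rightarrow x\in\D^{-m}\o(1)$'' is trivial, taking $p=m$.

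\medskip
For part (b) I would argue by induction on $k$, peeling off one difference at a time, and reducing the polynomial-plus-error statement to part (a) applied to a suitable shifted sequence. Writing $x=\varphi+z$ with $\varphi\in\Pol(m)$ and $z\in\D^{-k}\o(1)$, one has $\Delta^p x=\Delta^p\varphi+\Delta^p z$ with $\Delta^p\varphi\in\Pol(m-p)$, so the content is that $\Delta^p z\in\o(n^{k-p})$; this is exactly part (a) with $m$ replaced by $k$ (since $\Delta^k z\in\o(1)$). The converse direction assembles the polynomial from the top-order behavior and is handled by the same summation converse (ii) together with uniqueness of the polynomial part. Part (c) is then quick: $\Pol(m-1)\subset\D^{-m}\o(1)$ because $\Delta^m$ kills $\Pol(m-1)$ giving the zero sequence $\in\o(1)$; $\D^{-m}\o(1)\subset\o(n^m)$ is the $p=0$ instance of part (a); and for the strict inclusion I would exhibit an explicit sequence in $\o(n^m)\setminus\D^{-m}\o(1)$, for instance something like $x_n=n^m/\log n$ (or $n^{m-1/2}$ perturbed) whose $m$-th difference does not tend to $0$. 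Finally part (d) is a restatement combining (a) and (c): the right-hand side is contained in $\D^{-m}\o(1)$ by the reverse direction of (a), and $\D^{-m}\o(1)$ is contained in it by the forward direction of (a) together with $\D^{-m}\o(1)\subset\o(n^m)$ from (c).

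\medskip
The main obstacle I anticipate is the summation converse (ii) and, relatedly, the bookkeeping of constants in part (a): one must be careful that summing an $\o(n^{j-1})$ sequence gives $\o(n^j)$ \emph{including} the borderline where the constant of summation has to be shown not to spoil the estimate. The cleanest route is to prove (ii) via the Stolz--Cesàro theorem (if $a_n/b_n\to 0$ with $b_n\uparrow\infty$ then $(\sum_{i\leq n} a_i)/(\sum_{i\leq n}\Delta b_i')\to 0$ for the right telescoping choice), or simply by an $\varepsilon$-splitting of the tail sum---this is routine but is the only place real analysis beyond algebra enters, so I would write it carefully once and invoke it three times.
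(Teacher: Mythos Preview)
Your approach to (a) via the summation converse / Stolz--Ces\`aro (your fact (ii)) is essentially the paper's argument, and (b) and (d) do follow from (a) as you indicate. However, there is a genuine gap, and it stems from your fact (i).

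Fact (i), that $\Delta$ maps $\o(n^j)$ into $\o(n^{j-1})$, is \emph{false}. Take $y_n=(-1)^n$: then $y\in\o(n^j)$ for every $j>0$, yet $\Delta y_n=2(-1)^{n+1}\notin\o(1)$. More importantly, if (i) were true then iterating it $m$ times would give $\o(n^m)\subset\D^{-m}\o(1)$, directly contradicting the strict inclusion you are asked to establish in (c). You do not actually invoke (i) in your proof of (a)---you correctly switch to (ii)---so (a) survives; but the misconception behind (i) reappears in (c) and does damage there.

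Your proposed witnesses for $\o(n^m)\setminus\D^{-m}\o(1)$ do not work. Both $n^m/\log n$ and $n^{m-1/2}$ are ``smooth'': their successive differences behave like derivatives and decay. For $m=1$ one has $\Delta(n/\log n)\sim 1/\log n\to 0$ and $\Delta\sqrt{n}\sim 1/(2\sqrt{n})\to 0$, so both sequences lie in $\D^{-1}\o(1)$; the same persists for higher $m$. What is needed is a sequence that is small in magnitude but oscillates fast enough that differencing does not shrink it. The paper simply takes $a_n=(-1)^n$: then $a\in\o(n^m)$ for every $m\ge 1$, while $\D^m a_n=2^m(-1)^{m+n}\notin\o(1)$, so $a\notin\D^{-m}\o(1)$.

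A minor point: in (a) you have the labels ``forward'' and ``reverse'' swapped. The implication ``$\D^p x\in\o(n^{m-p})$ for all $p$ $\Rightarrow$ $x\in\D^{-m}\o(1)$'' is the trivial one (take $p=m$); the substantive direction is $\D^m x\in\o(1)\Rightarrow\D^p x\in\o(n^{m-p})$ for all $p$, which is precisely what your downward induction via (ii) proves.
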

\begin{proof}
(a) If \ $x\in\D^{-m}\o(1)$, \ then \ $\D^mx=\o(1)$ and 
\[
\frac{\D\D^{m-1}x_n}{\D n}=\D^mx_n=\o(1).
\]
By the Stolz-Cesaro theorem $\D^{m-1}x_n=\o(n)$. Hence
\[
\frac{\D\D^{m-2}x_n}{\D n^2}=\frac{n\D\D^{m-2}x_n}{n\D n^2}=
\frac{\D^{m-1}x_n}{n}\frac{n}{\D n^2}\longrightarrow 0.
\]
Again, by the Stolz-Cesaro theorem, \ $\D^{m-2}x_n=\o(n^2)$. \ Analogously \ 
$\D^{m-3}x_n=\o(n^3)$ \ and so on. Inverse implication is obvious.

(b) and (d) are consequences of (a). 

(c) The inclusion 
\[
\Pol(m-1)\subset\D^{-m}\o(1) 
\]
is obvious. The inclusion 
\[ 
\D^{-m}\o(1)\subset\o(n^m) 
\]
is a consequence of (a). If \ $a_n=(-1)^n$, \ then 
\[
\D^ma_n=2^m(-1)^{m+n}\notin\o(1). 
\]
Hence \ $a\in\o(n^m)\setminus\D^{-m}\o(1)$. 
\end{proof}

\begin{remark}
Assume \smallskip \ $m\in\N(0)$, \ $k\in\N(0,m)$. \ If \ $\Pol(m,k)$ \ denotes the 
subspace of \ $\Pol(m)$ \ generated by sequences \ $(n^m),(n^{m-1}),\dots,(n^k)$, \ then 
\[ 
\Pol(m)+\o(n^k)=\Pol(m,k)+\o(n^k) \quad\text{and}\quad 
\Pol(m,k)\cap\o(n^k)=0.
\]
Hence, \ $x\in \Pol(m)+\o(n^k)$ \ if and only if there exist constants 
$c_m,\dots,c_k$ and a sequence $w\in\o(n^k)$ such that 
\[
x_n=c_mn^m+c_{m-1}n^{m-1}+\dots+c_kn^k+w_n.
\]
Moreover, the constants \ $c_m,\dots,c_k$ \ and the sequence \ $w$ \ are unique and 
\[ 
x\in \Pol(m)+\D^{-k}\o(1) \quad \Longleftrightarrow \quad \D^pw_n=\o(n^{k-p}) \quad
\text{for any} \quad  p\in\N(0,k).
\]
If \ $P(m,k)$ \ and \ $D(m,k)$ \ denote the spaces defined by 
\[
P(m,k)=\Pol(m)+\o(n^k) \qquad \text{and} \qquad D(m,k)=\Pol(m)+\D^{-k}\o(1).
\] 
respectively, then we obtain a diagram 
\smallskip\\
\small
\[
\begin{CD}
P(m,0)  @))) P(m,1) @>>> P(m,2) @>>> \dots @>>> P(m,m) @>>> P(m,m+1)\\ 
@AAA       @AAA         @AAA     @.      @AAA         @AAA\\          
D(m,0)  @>>> D(m,1) @>>> D(m,2) @>>> \dots @>>> D(m,m) @))) D(m,m+1)
\end{CD}
\]
\normalsize\smallskip\\
where arrows denote inclusions. Note that 
\[
D(m,0)=\Pol(m)+\o(1)=P(m,0)
\]
and for \ $k>m$ \ we have 
\[
P(m,k)=\o(n^k), \qquad D(m,k)=\D^{-k}\o(1). 
\]
\end{remark}

\begin{remark}\label{lambda}
Assume \ $m\in\N(0)$ \ and \ $x\in\SQ$. \ If 
\begin{equation}
x\in\Pol(m)+\D^{-m}\o(1),
\end{equation}
then, by Lemma \ref{APL1}, the sequence \ $\D^mx$ \ is convergent. On the other hand, 
if \ $\lambda\in\R$ \ and 
\begin{equation}\label{aaa}
\D^mx=\lambda+\o(1),
\end{equation}
then taking \ $w_n=\lambda n^m/m!$ \ we have \ $\D^m(x-w)=\lambda+\o(1)-\lambda=\o(1)$. \ 
Hence
\[
x=w+(x-w)\in\Pol(m)+\D^{-m}\o(1).
\]
Using the Stolz-Cesaro theorem one can show that condition \eqref{aaa} is 
equivalent to the condition 
\[
\lim_{n\to\infty}\frac{p!\D^{m-p}z_n}{n^p}=\lambda \quad \text{for any} \quad 
p\in\N(0,m).
\]
\end{remark}

The next two lemmas are `regular' versions of Lemmas \ref{SDL} and \ref{ASL5}.

\begin{lemma}\label{SDL2}
Let \ $m\in\N$, \ $q\in\N(0,m-1)$, \ $z\in\SQ$ \ and  
\[
\sum_{n=1}^\infty n^{m-q-1}|\D^mz_n|<\infty.   
\]
Then \ $z\in\Pol(m-1)+\D^{-q}\o(1)$.
\end{lemma}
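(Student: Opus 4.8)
The plan is to deduce this ``regular'' refinement of Lemma \ref{SDL} from Lemma \ref{SDL} itself, applied not only to $z$ but to each iterated difference $\D^p z$, and then to reassemble the conclusions using the characterization of regularly asymptotically polynomial sequences in Lemma \ref{APL1}.

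First I would rewrite the desired conclusion in a form amenable to Lemma \ref{SDL}. By Lemma \ref{APL1}(b), applied with its parameter ``$m$'' replaced by $m-1$ and its ``$k$'' replaced by $q$ (legitimate since $q\in\N(0,m-1)$), the membership $z\in\Pol(m-1)+\D^{-q}\o(1)$ is equivalent to
\[
\D^p z\in\Pol(m-1-p)+\o(n^{q-p}) \qquad \text{for every } p\in\N(0,q).
\]
Thus it suffices to verify this family of statements one index at a time. Fixing $p\in\N(0,q)$ and writing $w=\D^p z$, I would use that differences compose, $\D^{m-p}w=\D^m z$, so that the single summability hypothesis transfers verbatim:
\[
\sum_{n=1}^\infty n^{(m-p)-1-(q-p)}|\D^{m-p}w_n|=\sum_{n=1}^\infty n^{m-q-1}|\D^m z_n|<\infty.
\]
This is exactly the hypothesis of Lemma \ref{SDL} for the sequence $w$ with its ``$m$'' equal to $m-p$ and its ``$s$'' equal to $q-p$; the required admissibility $m-p\in\N(1)$ and $q-p\le(m-p)-1$ both follow from $p\le q\le m-1$. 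Lemma \ref{SDL} then gives $\D^p z=w\in\Pol(m-p-1)+\o(n^{q-p})$, which is precisely the needed membership for this $p$. Letting $p$ range over $\N(0,q)$ and feeding the result back into Lemma \ref{APL1}(b) yields $z\in\Pol(m-1)+\D^{-q}\o(1)$.

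The only genuine care is the index bookkeeping: one must confirm that after the shift from $\D^m$ to $\D^{m-p}$ the exponent $(m-p)-1-(q-p)$ collapses to the single value $m-q-1$ appearing in the hypothesis, and that the parameter ranges stay admissible for all $p$ up to $q$ simultaneously. A minor edge case is $m=1$, which forces $q=0$ and places Lemma \ref{APL1} outside its stated range; there, however, the target reduces to $z\in\Pol(0)+\o(1)$ (as $\D^{-0}\o(1)=\o(1)$), which follows directly from Lemma \ref{SDL} with $s=0$.
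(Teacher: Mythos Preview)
Your proof is correct, but it follows a genuinely different route from the paper. The paper invokes an external factorization result (Lemma 2.3 of \cite{J Migda 2013}): from the summability hypothesis there exists $w\in\o(1)$ with $\D^m z=\D^{m-q}w$; choosing any $x$ with $\D^q x=w$ gives $x\in\D^{-q}\o(1)$ and $\D^m(z-x)=0$, so $z-x\in\Pol(m-1)$ and the conclusion follows in one stroke. You instead stay inside the paper, reducing Lemma~\ref{SDL2} to repeated applications of Lemma~\ref{SDL} (one for each $\D^p z$) and then reassembling via the characterization in Lemma~\ref{APL1}(b). This makes the relationship between the ``regular'' and ordinary versions of the lemma fully explicit and avoids importing a new external statement, at the cost of a slightly longer argument. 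In fact your argument is more than necessary: only the case $p=q$ is needed, since $\D^q z\in\Pol(m-1-q)+\o(1)$ already yields, after choosing $\Phi\in\Pol(m-1)$ with $\D^q\Phi$ equal to the polynomial part, that $z-\Phi\in\D^{-q}\o(1)$. Your handling of the edge case $m=1$ is appropriate.
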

\begin{proof}
By Lemma 2.3 in \cite{J Migda 2013}, there exists \ $w=\o(1)$ \ such that \  $\D^mz=\D^{m-q}w$. \ Choose \ $x\in\SQ$ \ such that \ $\D^qx=w$. \ Then \  $x\in\D^{-q}\o(1)$ \ and 
\[
\D^mz=\D^{m-q}w=\D^{m-q}\D^qx=\D^mx. 
\]
Hence \  $z-x\in\Pol(m-1)$ \ and 
\[
z=z-x+x\in\Pol(m-1)+\D^{-q}\o(1).
\]
\end{proof}

\begin{lemma}\label{ASL7}
Let \ $m\in\N(0)$, \ $q\in\N(0,m)$ \ and \ $u_n=c+\o(n^{-m})$. \ 
Assume \ $k(|c|-1)\geq 0$ \ or \ $x\in \O(n^\infty)$ \ and \ 
$z\in\Pol(m)+\Delta^{-q}\o(1)$. \ Then  
\[
x\in\Pol(m)+\Delta^{-q}\o(1).
\]
\end{lemma}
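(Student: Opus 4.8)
The plan is to follow the architecture of the proof of Lemma \ref{ASL5}: first reduce to the constant-coefficient sequence $z'_n=x_n+cx_{n+k}$, and then dispose of that case by passing to $q$-th differences, where the operator $x\mapsto(x_n+cx_{n+k})_n$ commutes with $\Delta$. Throughout I would use that $\Pol(m)+\Delta^{-q}\o(1)$ is a linear subspace of $\SQ$ (the sum of the subspace $\Pol(m)$ and the preimage $\Delta^{-q}\o(1)$ of the subspace $\o(1)$ under the linear map $\Delta^q$), and that by Lemma \ref{APL1} membership in it is controlled through the differences $\Delta^p$, $p\in\N(0,q)$. As a preliminary I would note that, since $q\leq m$, we have $z\in\Pol(m)+\Delta^{-q}\o(1)\subseteq\Pol(m)+\o(n^q)\subseteq\O(n^m)$, so $z=\O(n^m)$; then Lemma \ref{ASL4}$(1)$ (whose hypothesis $k(|c|-1)\geq 0$ or $x\in\O(n^\infty)$ is exactly what we assume) gives $x=\O(n^m)$, and hence $\Delta^j x=\O(n^m)$ for every $j\geq 0$.

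For the reduction to constant coefficients, write $u_n=c+e_n$ with $e_n=\o(n^{-m})$, so that $z'_n-z_n=-e_nx_{n+k}$. The key claim is that the sequence $(e_nx_{n+k})_n$ lies in $\Delta^{-q}\o(1)$, that is, $\Delta^q(e_nx_{n+k})=\o(1)$. This is the one place where the precise exponent in $u_n=c+\o(n^{-m})$ is decisive. By the discrete Leibniz rule,
\[
\Delta^q(e_nx_{n+k})=\sum_{i=0}^{q}\binom{q}{i}\,\Delta^i e_n\,\Delta^{q-i}x_{n+k+i},
\]
and since $\Delta^i e_n=\o(n^{-m})$ (a finite combination of $\o(n^{-m})$ terms) while $\Delta^{q-i}x_{n+k+i}=\O(n^m)$, every summand is $\o(n^{-m})\O(n^m)=\o(1)$. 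Because $\Pol(m)+\Delta^{-q}\o(1)$ is a subspace, it follows that $z'=z-(e_nx_{n+k})_n\in\Pol(m)+\Delta^{-q}\o(1)$.

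It then remains to treat the constant-coefficient case: assuming $z'_n=x_n+cx_{n+k}$ and $z'\in\Pol(m)+\Delta^{-q}\o(1)$, show $x\in\Pol(m)+\Delta^{-q}\o(1)$. Put $y=\Delta^q x$. Since $\Delta$ commutes with $x\mapsto(x_n+cx_{n+k})_n$, one has $\Delta^q z'_n=y_n+cy_{n+k}$, the same type of relation at the level of $q$-th differences. Applying $\Delta^q$ to $z'\in\Pol(m)+\Delta^{-q}\o(1)$ and using $\Delta^q\Pol(m)=\Pol(m-q)$ gives $\Delta^q z'\in\Pol(m-q)+\o(1)$. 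I would now invoke Lemma \ref{ASL5} for the relation $\Delta^q z'_n=y_n+cy_{n+k}$ with the constant coefficient $c$ and parameters $m-q$ and $\alpha=0$: the hypothesis $c=c+\o(n^{-(m-q)})$ holds trivially, and the side condition holds because $y=\Delta^q x\in\O(n^\infty)$ whenever $x\in\O(n^\infty)$. This yields $y=\Delta^q x\in\Pol(m-q)+\o(1)$. Writing $y=R+\delta$ with $R\in\Pol(m-q)$ and $\delta\in\o(1)$, and choosing $P\in\Pol(m)$ with $\Delta^q P=R$ (possible since $\Delta^q\colon\Pol(m)\to\Pol(m-q)$ is onto, its kernel being $\Pol(q-1)$), we get $\Delta^q(x-P)=\delta\in\o(1)$, whence $x=P+(x-P)\in\Pol(m)+\Delta^{-q}\o(1)$.

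The main obstacle is the reduction step: one must check not merely that the error $(e_nx_{n+k})_n$ produced by the variable coefficient is $\o(1)$, but that it stays inside the finer space $\Delta^{-q}\o(1)$. The Leibniz computation above is exactly what closes this gap, and it is the only point where the full strength of $u_n=c+\o(n^{-m})$ (as opposed to, say, $u_n=c+\o(1)$) is used; once the coefficient has been made constant, the commutation of $\Delta$ with $x\mapsto(x_n+cx_{n+k})_n$ reduces the whole problem to Lemma \ref{ASL5} applied to the $q$-th differences.
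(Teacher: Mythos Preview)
Your proof is correct and follows the same two-step architecture as the paper: reduce to the constant-coefficient relation $z'_n=x_n+cx_{n+k}$, then pass to differences and invoke Lemma~\ref{ASL5}. Two points of comparison are worth making.

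In the reduction step you call it the ``main obstacle'' to show that the error $(e_nx_{n+k})_n$ lies in the ``finer'' space $\Delta^{-q}\o(1)$ rather than merely in $\o(1)$, and you carry out a Leibniz expansion to verify $\Delta^q(e_nx_{n+k})=\o(1)$. This inverts the inclusion: in fact $\o(1)\subseteq\Delta^{-q}\o(1)$, since if $w\in\o(1)$ then $\Delta^q w$ is a finite linear combination of shifts of $w$ and hence also in $\o(1)$. The paper therefore just writes
\[
z'_n-z_n=(c-u_n)x_{n+k}=\o(n^{-m})\,\O(n^m)=\o(1)
\]
and absorbs this directly into $\Pol(m)+\Delta^{-q}\o(1)+\o(1)=\Pol(m)+\Delta^{-q}\o(1)$. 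Your Leibniz computation is correct but unnecessary; the ``full strength'' of $u_n=c+\o(n^{-m})$ is already exhausted by the one-line product estimate.

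For the constant-coefficient part the paper argues by induction on $q$, applying a single $\Delta$ at each step, whereas you apply $\Delta^q$ in one stroke and invoke Lemma~\ref{ASL5} for the pair $(y,\Delta^q z')$ with parameters $m-q$ and $\alpha=0$ (the side hypothesis being supplied by $y=\Delta^q x\in\O(n^m)\subseteq\O(n^\infty)$, which you correctly derived from Lemma~\ref{ASL4}). The two arguments are equivalent; yours is slightly more direct.
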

\begin{proof}
For \ $n\geq n_0$ \ let \ $z'_n=x_n+cx_{n+k}$. \ We will show, by induction on \ $q$, \ 
that 
\[
z'\in\Pol(m)+\Delta^{-q}\o(1) \quad \Longrightarrow \quad x\in\Pol(m)+\Delta^{-q}\o(1).
\]
For \ $q=0$ \ this assertion follows from Lemma \ref{ASL5}. Assume it is true 
for some \ $q\geq 0$. \ Let 
\[
z'\in\Pol(m)+\D^{-(q+1)}\o(1), \qquad z''=\D z', \qquad \text{and} \qquad 
x''=\Delta x. 
\]
Then 
\[
z''_n=\Delta z'_n=\Delta(x_n+cx_{n+k})=
\Delta x_n+c\Delta x_{n+k}=x''_n+cx''_{n+k},
\] 
\[
z''=\Delta z'\in\Delta(\Pol(m)+\Delta^{-(q+1)}\o(1))=\Pol(m-1)+\Delta^{-q}\o(1).
\]
If \ $x\in \O(n^\infty)$, \ then \ $x''=\Delta x\in \O(n^\infty)$. \ By inductive hypothesis 
\[
x''\in \Pol(m-1)+\Delta^{-q}\o(1).
\] 
By equality \ $x''=\Delta x$ \ we obtain \ $x\in\Pol(m)+\Delta^{-(q+1)}\o(1)$. \ 
Now, assume 
\[
z\in \Pol(m)+\Delta^{-q}\o(1). 
\]
Then \ $z_n=\O(n^m)$ \ and, by Lemma \ref{ASL4}, \ $x_n=\O(n^m)$. \ Hence \  $x_{n+k}=\O(n^m)$. \ Since 
\[
u_n=c+\o(n^{-m}), 
\]
we have 
\[
z'_n-z_n=(c-u_n)x_{n+k}=\frac{c-u_n}{n^{-m}}\frac{x_{n+k}}{n^m}=\o(1).
\]
Therefore 
\[
z'=z+(z'-z)\in\Pol(m)+\Delta^{-q}\o(1)+\o(1)=\Pol(m)+\Delta^{-q}\o(1).
\]
Hence, by the first part of the proof, we obtain \ $x\in\Pol(m)+\Delta^{-q}\o(1)$.
\end{proof}

\begin{theorem}\label{T2}
Assume all assumptions of Theorem \ref{T1} are satisfied and moreover let 
\[ 
s=q\in\N(0,m-1] \qquad \text{and} \qquad u_n=c+\o(n^{1-m}). 
\]
Then 
\[
x\in\Pol(m-1)+\D^{-q}\o(1).
\]
\end{theorem}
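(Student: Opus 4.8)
The plan is to run the proof of Theorem \ref{T1} almost verbatim, replacing the two `non-regular' ingredients, Lemma \ref{SDL} and Lemma \ref{ASL5}, by their regular counterparts, Lemma \ref{SDL2} and Lemma \ref{ASL7}, and to keep careful track of the polynomial degrees so that the strengthened summability assumption on $u$ can be exploited. Since the conclusion $\Pol(m-1)+\D^{-q}\o(1)$ is contained in the conclusion $\Pol(m-1)+\o(n^q)$ of Theorem \ref{T1}, what we are really doing is upgrading each step of that proof to its regular version.

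First I would set $z_n=x_n+u_nx_{n+k}$ for large $n$, exactly as in Theorem \ref{T1}. Because all assumptions of Theorem \ref{T1} are in force, its proof already produces, in each of the cases (a), (b), (c), the two facts
\[
\sum_{n=1}^\infty n^{m-1-s}|\D^mz_n|<\infty
\]
and the alternative that $k(|c|-1)\geq 0$ or $x\in\O(n^\infty)$ holds (in the $(u,k)$-nonoscillatory subcases this last alternative is obtained from $|x_n|\leq|z_n|$, which forces $x\in\O(n^\infty)$). I would simply quote these from the proof of Theorem \ref{T1} rather than re-derive them. Since $s=q$, the first display reads $\sum_{n=1}^\infty n^{m-q-1}|\D^mz_n|<\infty$, which is precisely the hypothesis of Lemma \ref{SDL2}; that lemma then sharpens the conclusion of Theorem \ref{T1} to
\[
z\in\Pol(m-1)+\D^{-q}\o(1).
\]

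The final, and only delicate, step is transferring this from $z$ to $x$ via Lemma \ref{ASL7}. The key point is that applying Lemma \ref{ASL7} literally with parameter $m$ would only yield $x\in\Pol(m)+\D^{-q}\o(1)$, one degree too high. Instead I would apply it with $m$ replaced by $m-1$. Then its hypothesis on $u$ becomes $u_n=c+\o(n^{-(m-1)})=c+\o(n^{1-m})$, which is exactly the extra assumption of Theorem \ref{T2}, and is what makes that assumption necessary; the index requirement $q\in\N(0,m-1)$ holds since $q\leq m-1$; the alternative $k(|c|-1)\geq 0$ or $x\in\O(n^\infty)$ is the one already secured; and the input $z\in\Pol(m-1)+\D^{-q}\o(1)$ is what Lemma \ref{SDL2} just supplied. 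Lemma \ref{ASL7} then delivers $x\in\Pol(m-1)+\D^{-q}\o(1)$, which is the assertion. The main obstacle is not any computation but the degree bookkeeping: one must invoke Lemma \ref{ASL7} at level $m-1$ rather than $m$, and recognize that this re-indexing is precisely what the strengthened hypothesis $u_n=c+\o(n^{1-m})$ is tailored to accommodate.
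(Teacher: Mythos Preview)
Your proposal is correct and follows exactly the paper's approach: the paper's proof of Theorem \ref{T2} is the single sentence ``Repeat the proof of Theorem \ref{T1} replacing Lemma \ref{SDL} and Lemma \ref{ASL5} by Lemma \ref{SDL2} and Lemma \ref{ASL7}, respectively,'' and you have unpacked precisely what that means. Your observation about invoking Lemma \ref{ASL7} at level $m-1$ (so that the hypothesis $u_n=c+\o(n^{1-m})$ matches) is the correct reading and mirrors how Lemma \ref{ASL5} is already applied at level $m-1$ in the proof of Theorem \ref{T1}.
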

\begin{proof}
Repeat the proof of Theorem \ref{T1} replacing Lemma \ref{SDL} and 
Lemma \ref{ASL5} by Lemma \ref{SDL2} and Lemma \ref{ASL7}, respectively.
\end{proof}


\begin{thebibliography}{10}


\bibitem{Agarwal 1996}
R. P. Agarwal, M. M. S. Manuel, E. Thandapani, {\it Oscillatory and nonoscillatory 
behavior of second order neutral delay difference equations}, Math. Comput. Modelling 24(1996), 5-11.

\bibitem{Bolat 2004}
Y. Bolat, O. Akyn, {\it Oscillatory behaviour of a higher order nonlinear neutral 
type functional difference equation with oscillating coefficients}, 
Appl. Math. Lett. 17 (2004), 1073-1078.

\bibitem{Cheng 1998}
W. T. Li, S. S. Cheng, {\it Asymptotic trichotomy for positive solutions of a class 
of odd order nonlinear neutral difference equations,} Comput. Math. Appl. 
35 (1998) no. 8, 101-108.

\bibitem{Demidovic}
V. B. Demidovi\v c, {\it A certain criterion for the stability of difference equations},  (Russian), Diff. Urav. 5 (1969), 1247-1255. 

\bibitem{Dzurina 2002}
J. D\v zurina, {\it Asymptotic behavior of solutions of neutral nonlinear differential 
equations,} Archivum Mathematicum, 38 (4) (2002), 319-325.

\bibitem{Gleska}
A. Gleska, J. Werbowski, {\it Comparison theorems for the asymptotic behavior of solutions of nonlinear difference equations}, J. Math. Anal. Appl. 226 (1998), no. 2, 456-465.

\bibitem{Rogovchenko 2008}
M. Hasanbulli, Y. V. Rogovchenko, {\it Asymptotic behavior nonoscillatory solutions to 
n-th order nonlinear neutral differential equations,} Nonlinear Analysis, 69 (2008),  1208-1218.

\bibitem{Huang}
X. Huang, Z. Xu, {\it Nonoscillatory Solutions of Certain Higher Order 
Neutral Difference Equations,} Southeast Asian Bulletin of Mathematics 32 (2008), 
445-458.

\bibitem{Kang 2009}
Z. Liu, \ Y. Xu, \ S. M. Kang, {\it Global solvability for a second order nonlinear 
neutral delay difference equation,} Comput. Math. Appl. 57 (2009), no. 4, 587-595.

\bibitem{Karpuz 2009}
B. Karpuz, R. N. Rath, S. K. Rath, {\it On Oscillation and asymptotic behaviour of 
a higher order functional difference equation of neutral type}, Int. J. Difference 
Equ. 4(1) (2009), 69-96.

\bibitem{Liu 2010}
M. Liu, Z. Guo, {\it Solvability of a higher-order nonlinear neutral delay difference  equation,} Adv. Difference Equ. (2010), Art. ID 767620, 14 pp.

\bibitem{J Migda 2013}
J. Migda, {\it Asymptotically polynomial solutions of difference equations}, 
Adv. Difference Equ. (2013), 2013:92, 1-16. 

\bibitem{J Migda 2014} 
J. Migda, {\it Approximative solutions of difference equations},  Electron. J. Qual. 
Theory Differ. Equ. 2014, No. 13, 1--26. 

\bibitem{MJ Migda 2001}
M. Migda, J. Migda, {\it On the asymptotic behavior of solutions of higher order 
nonlinear difference equations,} Nonlinear Anal. 47 (2001), 4687-4695.

\bibitem{MJ Migda 2004}
M. Migda, J. Migda, {\it On a class of first order nonlinear difference equations of  neutral type,} Math. Comput. Modelling 40 (2004), 297-306.

\bibitem{MJ Migda 2005}
M. Migda, J. Migda, {\it Asymptotic properties of solutions of second-order neutral 
difference equations,} Nonlinear Anal. 63 (2005), e789-e799.

\bibitem{MJ Migda 2009}
M. Migda, J. Migda, {\it Oscillatory and Asymptotic Properties of Solutions of Even 
Order Neutral Difference Equations,} J. Difference Equ. Appl. 15 (11-12) (2009), 
1077-1084.  

\bibitem{M Migda 2006}
M. Migda, {\it Asymptotic properties of nonoscillatory solutions of higher order neutral difference equations,} Opuscula Math. 26 (2006), no. 26, 497-504. 

\bibitem{Naito 1998}
M. Naito, {\it An asymptotic theorem for a class of nonlinear neutral differential  equations,}  Czechoslovak Math. J. 48 (1998), 419-432. 

\bibitem{Thandapani 1997}
E. Thandapani, P. Sundaram, J. R. Graef, P. W. Spikes, {\it Asymptotic behaviour and  oscillation of solutions of neutral delay difference equations of arbitrary order}, 
Math. Slovaca 47 (1997), no. 5, 539--551.

\bibitem{Thandapani 2004}
E. Thandapani, R. Arul, P. S. Raja, {\it The asymptotic behavior of nonoscillatory  solutions of nonlinear neutral type difference equations}, Math. Comput. Modelling, 
39, (2004), 1457-1465.

\bibitem{Wang Sun}
Z. Wang, J. Sun,  {\it Asymptotic behavior of  solutions of nonlinear higher-order neutral type difference equations}, J. Difference Equ. Appl. 12 (2006), 419-432.

\bibitem{Zafer}
A. Zafer, {\it Oscillatory and asymptotic behavior of higher order difference equations},  Math. Comput. Modelling 21, 4(1995), 43-50.

\bibitem{Zhou}
Y. Zhou, B. G. Zhang, {\it Existence of nonoscillatory solutions of higher-order neutral  delay difference equations with variable coefficients,} 
Comput. Math. Appl. 45 (2003), no. 6-9, 991-1000.

\bibitem{Cheng 2006}
Z. Q. Zhu, G. Q. Wang, S. S. Cheng, {\it A classification scheme for nonoscillatory solutions of a higher order neutral difference equation,} 
Adv. Difference Equ. (2006), Art. 47654, 1-19.

\end{thebibliography}
\end{document}